\newcommand{\R}{\mathbb{R}}
\newcommand{\Z}{\mathbb{Z}}
\newcommand{\N}{\mathbb{N}}
\newcommand{\K}{\mathcal{K}}
\newcommand{\Ko}{\mathcal{K}_o}
\newcommand{\conv}{\mathrm{conv}}
\newcommand{\vol}{\mathrm{vol}}
\newcommand{\suk}{\mathrm{h}}
\newcommand{\va}{{\boldsymbol a}}
\newcommand{\vb}{{\boldsymbol b}}
\newcommand{\vc}{{\boldsymbol c}}
\newcommand{\ve}{{\boldsymbol e}}
\newcommand{\vt}{{\boldsymbol t}}
\newcommand{\vu}{{\boldsymbol u}}
\newcommand{\vv}{{\boldsymbol v}}
\newcommand{\vw}{{\boldsymbol w}}
\newcommand{\vx}{{\boldsymbol x}}
\newcommand{\vy}{{\boldsymbol y}}
\newcommand{\vnull}{{\boldsymbol 0}}
\newcommand{\LE}{\#}
\title[Bounds on the lattice point enumerator]{Bounds on the lattice point enumerator via slices and projections}
\author{Ansgar Freyer and Martin Henk}
\address{Technische Universität Berlin, Institut für Mathemtik, Sekr. MA4-1, Stra{\ss}e des 17 Juni 136, D-10623 Berlin}
\email{freyer@math.tu-berlin.de, henk@math.tu-berlin.de}
\date{}
\numberwithin{equation}{section}
\begin{document}


\theoremstyle{plain}
\newtheorem{theorem}{Theorem}[section]
\newtheorem{lemma}[theorem]{Lemma}

\newtheorem{proposition}[theorem]{Proposition}
\newtheorem{example}[theorem]{Example}
\newtheorem*{question}{Question}
\newtheorem{thmx}{Theorem}
\renewcommand{\thethmx}{\Alph{thmx}}
\newtheorem{lemmax}[thmx]{Lemma}

\theoremstyle{definition}
\newtheorem*{definition}{Definition}

\newtheorem{remark}[theorem]{Remark}

\begin{abstract} Gardner, Gronchi and Zong posed the problem to 
  find a discrete analogue of M.~Meyer's inequality bounding
   the volume of a convex body from
   below by the geometric mean of the volumes of its slices with the
   coordinate hyperplanes. Motivated by this problem, for which we
   provide a first general bound, we study in a more general context
   the question to bound the number of lattice points of a convex body
   in terms of slices as well as projections.  
\end{abstract}

\maketitle


\section{Introduction}
\label{sec:intro}
One of the central questions in 
Geometric Tomography is to determine or to reconstruct a set $K$ in
the $n$-dimensional Euclidean space $\R^n$ by some of its lower
dimensional ``structures'' (see \cite{Gardner2006}). Usually, these are projections on and sections with lower dimensional subspaces of $\R^n$.
A classical and very well-known example in this context is the famous
Loomis-Whitney inequality \cite{lw}, which compares the volume of a
non-empty compact set $K$ to the geometric mean of its projections onto the coordinate hyperplanes:
\begin{equation}
\label{eq:lw}
\vol(K)^{\frac{n-1}{n}} \leq \Big(\prod_{i=1}^n \vol_{n-1} (K|\ve_i^\perp)\Big)^\frac{1}{n}.
\end{equation}
Here $\vol(K)$ denotes the volume, i.e., the $n$-dimensional Lebesgue
measure of the set $K$, and  $\vol_{n-1} (K|\ve_i^\perp)$ the
$(n-1)$-dimensional volume of the orthogonal projection of $K$ onto
the coordinate hyperplane orthogonal to the $i$th unit vector $\ve_i$. 
Equality is attained, e.g., if $K=[a_1,b_1]\times...\times [a_n,b_n]$,
$a_i,b_i\in\R$, is a rectangular box. For various generalizations and
extensions of this inequality we refer to
\cite{BrazitikosGiannopoulosLiakopoulos2018} and the references within.

Loomis and Whitney proved \eqref{eq:lw}  by  observing that
it suffices to prove it when $K$ is the  pairwise non-overlapping disjoint
union of equal cubes which is then a purely combinatorial problem. In
particular, this combinatorial version implies (and is actually
equivalent to) the following discrete
variant of \eqref{eq:lw}
\begin{equation}
  \label{eq:lw_lat}
 \LE(K)^{\frac{n-1}{n}} \leq \Big(\prod_{i=1}^n \LE(K|\ve_i^\perp)\Big)^\frac{1}{n},
\end{equation}
where  $\LE(M)=|M\cap\Z^n|$ is the lattice point enumerator (with
respect to $\Z^n$). So \eqref{eq:lw} and \eqref{eq:lw_lat}
are equivalent statements for compact sets. The discrete version
\eqref{eq:lw_lat} was also independently proven by Schwenk and Munro \cite{SchwenkMunro1983}.

Due to the comparison of $n$- and $(n-1)$-dimensional volumes in
\eqref{eq:lw}, it is easy to see that there is no lower bound on the volume
in terms of the geometric mean of $\vol_{n-1}
(K|\ve_i^\perp)$. However, if we further assume  that $K\in \K^n$,
i.e., it belongs to the family of convex and compact sets,  and
if we replace projections by sections, then it was shown by M.~Meyer
\cite{meyer} 
\begin{equation}
\label{eq:meyer}
\vol(K)^{\frac{n-1}{n}} \geq \frac{n!^{\frac{1}{n}}}{n}\Big( \prod_{i=1}^n \vol_{n-1}(K\cap \ve_i^\perp)\Big)^{\frac{1}{n}},
\end{equation}
where equality is attained, if and only if $K$ is a  generalized
crosspolytope, i.e., $K=\conv\{a_1\ve_1, -b_1\ve_1,...,a_n \ve_n,
-b_n\ve_n\},$ for some $a_i,b_i\geq 0$. Observe that $n!^{1/n}/n$ is
asymptotically $1/\mathrm{e}$. 

In \cite{ggz}, Gardner, Gronchi and Zong posed the 
question to find a discrete  analogue of M.~Meyer's inequality \eqref{eq:meyer}; more
precisely, they asked 
\begin{question}
Let $n\in\N$. Is there a constant $c_n>0$ such that for all $K\in\K^n$ 
\begin{equation}
\label{eq:dmeyer-question}
\LE(K)^{\frac{n-1}{n}}\geq c_n \Big(\prod_{i=1}^n\LE(K \cap
\ve_i^\perp)\Big)^\frac{1}{n} \,{\huge ?}
\end{equation}
\end{question}
As in the case of the Loomis-Whitney inequality, a discrete version
\eqref{eq:dmeyer-question} would imply the analogous inequality for the
volume, and hence, by  \eqref{eq:meyer} we certainly have $c_n\leq
n!^{1/n}/n$ (cf.\ \eqref{eq:vol-appr}). In the plane, Gardner et al.~\cite{ggz} proved 
\begin{equation}
\label{eq:ggz-dim2}
\LE(K)^{\frac{1}{2}}>\frac{1}{\sqrt{3}} \big(\LE(K\cap \ve_1^\perp)\cdot \LE(K\cap \ve_2
^\perp)\big)^\frac{1}{2},
\end{equation}
for any $K\in\K^2$. The elongated cross-polytope  $K=\conv\{\pm
\ve_1, \pm h\ve_2\}$ shows that $\frac{1}{\sqrt{3}}$ is
asymptotically best possible, i.e., for $h\to\infty$. Hence, in
contrast to the Loomis-Whitney inequality, \eqref{eq:meyer} has no equivalent
discrete version, since in the plane the constant in Meyer's
inequality is $1/\sqrt{2}$. 

The short answer to the question above by Gardner et al.~
is ``No!'' for  arbitrary convex bodies and $n\geq 3$ (cf.~Proposition
\ref{prop:simplex}). Restricted to 
the set $\Ko^n$ of origin-symmetric convex bodies, however, we have
the following result.

\begin{theorem}
\label{thm:dmeyer}
Let $K\in\K_o^n$. Then
\begin{equation*} 
\LE(K)^{\frac{n-1}{n}} > \frac{1}{4^{n-1}} \Big(\prod_{i=1}^n \LE
(K\cap \ve_i^\perp)\Big)^\frac{1}{n}.
\end{equation*} 
\end{theorem}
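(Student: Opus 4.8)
The plan is to raise both sides to the $n$-th power, reducing the claim to the equivalent inequality
\[
  \prod_{i=1}^n \LE(K\cap\ve_i^\perp)\;<\;4^{\,n(n-1)}\,\LE(K)^{\,n-1},
\]
and to prove this by induction on $n$, with $n=2$ as the base case (there one may simply quote \eqref{eq:ggz-dim2}, which is stronger). The choice of origin-symmetric bodies and of sections is essential: since $K\cap\ve_i^\perp\subseteq K$ one always has $\LE(K\cap\ve_i^\perp)\le\LE(K)$, whereas $\LE(K|\ve_i^\perp)$ can be made arbitrarily large while $\LE(K)=1$ (a very thin, long, origin-symmetric slab around a line of badly approximable slope), so there is no reverse Loomis--Whitney inequality for projections to fall back on. The extremal configuration, exactly as for Meyer's inequality \eqref{eq:meyer}, is a generalized cross-polytope: with $\lambda_i:=\max\{t\ge 0:t\ve_i\in K\}$ one has
\[
  K\;\supseteq\;\conv\{\pm\lambda_i\ve_i:i=1,\dots,n\}\;\supseteq\;\prod_{i=1}^n\Big[-\tfrac{\lambda_i}{n},\,\tfrac{\lambda_i}{n}\Big],
\]
the last inclusion because $\sum_i|x_i|/\lambda_i\le 1$ whenever $|x_i|\le\lambda_i/n$ for all $i$. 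This gives the baseline bound $\LE(K)\ge\prod_{i=1}^n(2\lfloor\lambda_i/n\rfloor+1)$, and it disposes of degeneracies: if some $\lambda_i=0$ then $K\subseteq\ve_i^\perp$ and one argues in dimension $n-1$, so we may assume $K$ is full-dimensional.

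For the inductive step I would pick a coordinate $j$ and slice $K$ into the layers $K_t:=K\cap\{x_j=t\}$, $t\in\Z$. The central layer $K_0=K\cap\ve_j^\perp$ lies in $\K_o^{n-1}$, so the induction hypothesis bounds $\prod_{i\ne j}\LE(K\cap\ve_i^\perp\cap\ve_j^\perp)$ by a power of $\LE(K\cap\ve_j^\perp)$. One then has to (a) pass from the $(n-2)$-dimensional sections $K\cap\ve_i^\perp\cap\ve_j^\perp$ that appear there back to the $(n-1)$-dimensional sections $K\cap\ve_i^\perp$ we actually want, and (b) bring $\LE(K)$ itself into play. For (a), write $\LE(K\cap\ve_i^\perp)=\sum_{t\in\Z}\LE\big(K\cap\ve_i^\perp\cap\{x_j=t\}\big)$ and compare each off-centre slice with the central one: by origin-symmetry the difference body of the $t$-slice of $K\cap\ve_i^\perp$ lies in twice its central slice, which yields a lattice-point estimate $\LE\big(K\cap\ve_i^\perp\cap\{x_j=t\}\big)\le C\cdot\LE(K\cap\ve_i^\perp\cap\ve_j^\perp)$; summed over the at most $2\lfloor w_j\rfloor+1$ non-empty layers ($w_j=\max_{\vx\in K}x_j$) this controls $\LE(K\cap\ve_i^\perp)$. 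For (b), the inscribed bipyramid $\conv\big((K\cap\ve_j^\perp)\cup\{\pm w_j\ve_j\}\big)\subseteq K$, counted slice by slice in the $x_j$-direction, supplies a bound of the shape $\LE(K)\gtrsim(2\lfloor w_j\rfloor+1)\cdot\LE(K\cap\ve_j^\perp)$. Combining (a) and (b) with the induction hypothesis and taking the product over $i\ne j$, the ``width'' factors $2\lfloor w_j\rfloor+1$ and the section $\LE(K\cap\ve_j^\perp)$ recombine into $\LE(K)$, and the remaining numerical constants are what must be made to fit under $4^{\,n(n-1)}$.

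The main obstacle is precisely the discrete comparison of parallel lattice slices in step (a). In the continuous theory Brunn's concavity principle makes the central slice the largest, but for lattice points this is false — an off-centre integer slice can contain strictly more points, and a homothety or translation of a convex body can annihilate lattice points altogether (for instance $\tfrac12[-1,1]^{n-1}$ contains only the origin). The symmetric difference-body trick buys a comparison $\LE(\text{slice}_t)\le C\cdot\LE(\text{central slice})$, but keeping the constant $C$ small enough that its $(n-1)$-fold product across the $n$ coordinates, together with the factors of $2$ coming from parity (the midpoint of two lattice points need not be a lattice point), stays below $4^{\,n(n-1)}$ is the delicate part; this is also why the constant obtained is the crude $\tfrac{1}{4^{\,n-1}}$ rather than the $\tfrac{1}{\mathrm e}$-type constant $\tfrac{n!^{1/n}}{n}$ of \eqref{eq:meyer}, and, consistently with \eqref{eq:ggz-dim2}, no constant as good as the continuous one can be expected here.
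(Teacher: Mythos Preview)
Your step (b) is where the argument breaks. The claimed bound $\LE(K)\gtrsim(2\lfloor w_j\rfloor+1)\cdot\LE(K\cap\ve_j^\perp)$ is false, even allowing a dimension-dependent implied constant. First, the inclusion $\conv\big((K\cap\ve_j^\perp)\cup\{\pm w_j\ve_j\}\big)\subseteq K$ already presumes $w_j\ve_j\in K$, which need not hold: $w_j=\suk_K(\ve_j)$ may be realised far from the $\ve_j$-axis. Concretely, in $\R^2$ take $K=\{(x,y):|y-\alpha x|\le\epsilon,\ |x|\le L\}$ with $\alpha$ badly approximable and $\epsilon$ small enough; then $K\in\Ko^2$ is full-dimensional, $\LE(K)=1$, $\LE(K\cap\ve_1^\perp)=1$, yet $w_1=L$ is arbitrary, so $(2\lfloor w_1\rfloor+1)\cdot\LE(K\cap\ve_1^\perp)=2L+1$ while $\LE(K)=1$. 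Second, even when $w_j\ve_j\in K$, the bipyramid only yields a \emph{sum}: its slice at integer height $t\ne 0$ is a shrunk copy of $K\cap\ve_j^\perp$ whose lattice-point count may collapse to $1$. Your own extremal candidate $K_h=\conv\big(C_{n-1}\times\{0\}\cup\{\pm h\ve_n\}\big)$ is exactly this bipyramid, with $\LE(K_h)=3^{n-1}+2h$ but $(2h+1)\,\LE(K_h\cap\ve_n^\perp)=(2h+1)\,3^{n-1}$. Without (b), the width factors never recombine with $\LE(K\cap\ve_j^\perp)$ into $\LE(K)$, and the induction does not close. (Incidentally, the obstacle you flag in (a) is not the real one: the discrete Brunn comparison of parallel slices does hold with constant $2^{n-2}$.)

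The paper's proof is not inductive and sidesteps this difficulty entirely. Writing $K_i=K\cap\ve_i^\perp$ and $\Lambda_i=\Z^n\cap\ve_i^\perp$, one first uses \eqref{eq:dilates} to pass to the even sublattices, obtaining $\LE(K_i)<4^{n-1}\,\LE_{2\Lambda_i}(K_i)$ (after disposing of the easy case where some $\tfrac12 K_i$ meets $\Z^n$ only at the origin). The core of the argument is then the single observation that the map
\[
(\va_1,\dots,\va_n)\ \longmapsto\ \Big(\tfrac12(\va_2-\va_1),\ \tfrac12(\va_3-\va_1),\ \dots,\ \tfrac12(\va_n-\va_1)\Big)
\]
from $\prod_{i=1}^n\big(K_i\cap 2\Lambda_i\big)$ into $(K\cap\Z^n)^{n-1}$ is well-defined (by symmetry, convexity, and $\va_i\in 2\Z^n$) and injective (since $\va_i\in\ve_i^\perp$ pins down every coordinate of $\va_1$, hence of all $\va_i$). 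This yields $\prod_i\LE_{2\Lambda_i}(K_i)\le\LE(K)^{n-1}$ directly, with no width factors to reconcile.
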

We do not believe that this inequality is best possible.
Instead, we propose ${3}^\frac{1-n}{n}$ to be the right constant (cf.\ Example \ref{ex:berg}).

While it is not possible to bound the volume of a symmetric convex set
$K$ from above in terms of  $\prod_{i=1}^n \vol_{n-1}(K\cap
\ve_i^\perp)$, Feng, Huang and Li proved in \cite{fhl} that there is a
constant $\tilde c_n\leq (n-1)!$ such that for any $K\in\K_o^n$ there exists an orthogonal basis $\vu_1,...,\vu_n\in\R^n$ such that:
\begin{equation}
\label{eq:rev_meyer}
\vol(K)^{\frac{n-1}{n}} \leq \tilde c_n \Big( \prod_{i=1}^n \vol_{n-1}(K\cap \vu_i^\perp)\Big)^{\frac{1}{n}}.
\end{equation}

In a recent preprint, Alonso-Guti\'{e}rrez and Brazitikos \cite{agb}
improved
this result  considerably: they showed that up to a universal
constant the best possible $\tilde c_n$ is equal to the isotropic
constant $L_K$ which is bounded from above by $n^{1/4}$ (see
\cite{klartag}). Moreover, they proved that this is valid for any
centered convex body, i.e., for a convex body whose centroid is at the origin. Inspired by this, we prove the following inequaltities:

\begin{theorem}
\label{thm:drev_meyer}
Let $K\in\K_o^n$. 
There exists a basis $\vb_1,...,\vb_n$ of the
lattice $\Z^n$ such that 
\begin{equation}\label{eq:rev-meyer-hom}
\LE(K)^{\frac{n-1}{n}} < O(n^2\,2^n) \Big(\prod_{i=1}^n \LE(K\cap \vb_i^\perp) \Big)^{\frac{1}{n}}, 
\end{equation}
and there exists $\vt_i\in\Z^n$, $1\leq i\leq n$,  such that 
\begin{equation}\label{eq:rev-meyer-inhom}
\LE(K)^{\frac{n-1}{n}} < O(n^2) \Big(\prod_{i=1}^n \LE\big(K\cap (\vt_i+\vb_i^\perp)\big) \Big)^{\frac{1}{n}}.
\end{equation}
\end{theorem}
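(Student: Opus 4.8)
The plan is to transfer the continuous reverse-Meyer inequality \eqref{eq:rev_meyer} to the lattice setting by comparing $\LE$ with volume. The first step is to pass from $K$ to a suitably rounded body: since $K \in \Ko^n$, standard bounds (e.g.\ $\LE(K) \geq \vol(K)$ when $K$ is symmetric, together with upper bounds like $\LE(K) \leq 2^n \vol(K) + \text{lower order}$, or more precisely the Davenport-type estimate $\LE(K) \leq \sum_{j} \binom{n}{j} \vol_{n-j}(\text{projections})$) let one sandwich $\LE(K)$ between constant multiples of $\vol(K)$ as long as $K$ is not too thin in any direction. The issue is that $K$ may be very flat, so I would split into the "fat" case, where every successive minimum of $K$ with respect to $\Z^n$ is bounded below by a constant and the volume comparison is clean, and a "thin" case handled by induction on the dimension after slicing.

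For the homogeneous statement \eqref{eq:rev-meyer-hom}: apply \eqref{eq:rev_meyer} (or rather its sharpened form via \cite{agb}, though the crude $\tilde c_n \leq (n-1)!$ already suffices up to the stated $O(n^2 2^n)$ if combined carefully — actually one needs the polynomial isotropic-constant bound to get $n^2$) to obtain an orthogonal basis $\vu_1,\dots,\vu_n$ with $\vol(K)^{(n-1)/n} \lesssim \tilde c_n \prod_i \vol_{n-1}(K\cap\vu_i^\perp)^{1/n}$. The orthogonal basis is not a lattice basis, so the key step is to replace $\vu_1,\dots,\vu_n$ by a genuine $\Z^n$-basis $\vb_1,\dots,\vb_n$ without losing more than a bounded factor: here one invokes a result from the geometry of numbers (a Minkowski-type reduction, or Mahler/Weyl's estimate) stating that $\Z^n$ has a basis whose directions are "close" to any prescribed orthogonal frame, or alternatively one works with the hyperplanes $\vu_i^\perp$ directly and uses that $\LE(K\cap \vu_i^\perp)$ and $\vol_{n-1}(K\cap \vu_i^\perp)$ differ by a controlled factor when $\vu_i^\perp \cap \Z^n$ is a full-rank sublattice of $\vu_i^\perp$ — which one can arrange, at the cost of a factor polynomial in $n$, by a rational approximation / clearing-denominators argument applied to the $\vu_i$.

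For the inhomogeneous statement \eqref{eq:rev-meyer-inhom} one gains a factor $2^n$ because one is now allowed to translate each slicing hyperplane: the point is that $\sum_{\vt \in \Z^n / (\vb_i^\perp \cap \Z^n)} \LE(K \cap (\vt + \vb_i^\perp)) = \LE(K)$, so by averaging there is a translate $\vt_i$ with $\LE(K\cap(\vt_i+\vb_i^\perp))$ at least $\LE(K)$ divided by the number of nonempty layers, i.e.\ at least $\LE(K) / (\text{width of } K \text{ in direction } \vb_i + 1)$; combined with a Loomis–Whitney-type or John-position control on the product of widths this removes the exponential loss. I expect the main obstacle to be precisely the step converting the orthogonal John/isotropic frame into a lattice basis while keeping the volume-to-lattice-point comparison valid on each $(n-1)$-dimensional slice: the determinant of the induced sublattice on $\vb_i^\perp$ must be kept bounded, and the slice $K \cap \vb_i^\perp$ must be shown not to degenerate, which is where the case distinction between thin and fat bodies, and possibly an induction on $n$, will do the real work.
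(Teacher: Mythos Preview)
Your proposal has a genuine gap at exactly the point you flag yourself: converting the orthogonal frame $\vu_1,\dots,\vu_n$ coming from the continuous inequality \eqref{eq:rev_meyer} into a basis of $\Z^n$. There is no general result saying $\Z^n$ admits a basis whose directions are close to an \emph{arbitrary} prescribed orthogonal frame with only polynomial loss; the $\vu_i$ produced by the continuous theorem can point in wildly irrational directions, and rational approximation to one $\vu_i$ will blow up the determinant of the sublattice $\Z^n\cap\vu_i^\perp$ without any control. The volume--to--lattice-point comparison on each slice $K\cap\vb_i^\perp$ needs precisely that determinant to be bounded, so the argument stalls. In short, the continuous reverse-Meyer inequality is a red herring here: its output carries no arithmetic information about $\Z^n$, and you cannot retrofit it.

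The paper's argument is entirely different and never touches \eqref{eq:rev_meyer}. Instead it exploits the duality $\suk_K(\vb)=|\vb|_{K^\star}$ and works with the successive minima of the \emph{polar} body $K^\star$ with respect to $\Z^n$. For any $\vb\in\Z^n$ one has the pigeonhole estimate
\[
\LE(K)\le (2\lfloor\suk_K(\vb)\rfloor+1)\,\max_{\vt\in\Z^n}\LE\big(K\cap(\vt+\vb^\perp)\big),
\]
so if $\vb_1,\dots,\vb_n$ is a $\Z^n$-basis with $|\vb_i|_{K^\star}\le i\,\lambda_i(K^\star)$ (such a basis exists by Mahler's bound \eqref{eq:thm:basis}), multiplying over $i$ gives $\LE(K)^n$ bounded by $n!\prod_i(2\lambda_i(K^\star)+1)$ times the product of maximal slices. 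The remaining quantity $\prod_i\lambda_i(K^\star)$ is then estimated against $\LE(K)$ via Minkowski's second theorem \eqref{eq:minkowski}, the Mahler volume-product bound \eqref{eq:mahler}, and van der Corput \eqref{eq:vdcorput}; this yields \eqref{eq:rev-meyer-inhom}, and \eqref{eq:rev-meyer-hom} follows by the discrete Brunn inequality (Lemma \ref{lem:brunn}). The degenerate case $\dim(K\cap\Z^n)<n$ is handled by induction on $n$. Note in particular that the lattice basis is produced \emph{intrinsically} from the successive minima of $K^\star$, not by approximating any continuous frame.
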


Observe that due to Brunn's concavity principle (see, e.g.,
\cite[Theorem 1.2.1]{ArtsteinAvidanGiannopoulosMilman2015}) the volume
maximal slice $\vol_{n-1}(K\cap(\vt+\vu^\perp))$, $\vt\in\R^n$,  of an
origin-symmetric convex body $K\in\Ko^n$ is always the central slice,
i.e., $\vt=\vnull$. This is no longer true regarding lattice points
which explains the difference bewtween \eqref{eq:rev-meyer-hom} and \eqref{eq:rev-meyer-inhom}. Here we
have the following kind of a discrete Brunn's concavity principle.
\begin{lemma} Let $K\in\Ko^n$ and let $L\subset\R^n$ be a
  $k$-dimensional linear lattice subspace, i.e., $\dim(L\cap\Z^n)=k$,
  $k\in\{0,\dots,n-1\}$.
  Then for any $\vt\in\R^n$,  
  \begin{equation*}
       \LE(K\cap(\vt+L))\leq 2^{k}\LE(K\cap L),
     \end{equation*}
     and the inequality is best possible.
     \label{lem:brunn}     
\end{lemma}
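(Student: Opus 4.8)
The plan is a pigeonhole argument over residue classes modulo $2\Lambda$, where $\Lambda:=L\cap\Z^n$ is the rank-$k$ lattice living in $L$. If $\vt+L$ contains no point of $\Z^n$, then $\LE(K\cap(\vt+L))=0$ and the inequality is trivial, so assume the set $A:=K\cap(\vt+L)\cap\Z^n$ is non-empty. The starting observation is that for any $\va,\va'\in A$ one has $\va-\va'\in L\cap\Z^n=\Lambda$; hence $A$ splits into the fibers $A_c$ of the map $\va\mapsto(\va-\va_0)\bmod 2\Lambda$ (for a fixed $\va_0\in A$), and the number of non-empty fibers is at most $|\Lambda/2\Lambda|=2^k$, since $\Lambda\cong\Z^k$.

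The heart of the argument uses the origin-symmetry of $K$. Suppose $\va,\va'\in A$ lie in the same class modulo $2\Lambda$. Then $\tfrac12(\va-\va')$ is a lattice point, because $\va-\va'\in 2\Lambda$; it lies in $L$, because $\va-\va'\in L$; and it lies in $K$, because $\va\in K$, $-\va'\in K$ by symmetry, and $K$ is convex, so the midpoint $\tfrac12\bigl(\va+(-\va')\bigr)$ belongs to $K$. Therefore $\tfrac12(\va-\va')\in K\cap L\cap\Z^n$. Fixing in each non-empty class $c$ a representative $\va_c\in A_c$, the map $\va\mapsto\tfrac12(\va-\va_c)$ is thus a well-defined injection $A_c\to K\cap L\cap\Z^n$, whence $|A_c|\le\LE(K\cap L)$. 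Summing over the at most $2^k$ non-empty classes gives $\LE(K\cap(\vt+L))=|A|\le 2^k\,\LE(K\cap L)$.

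For the sharpness I would exhibit, for every $k\in\{0,\dots,n-1\}$, an origin-symmetric body attaining the bound. Take $L=\spann\{\ve_1,\dots,\ve_k\}$, put $S=[-\tfrac14,\tfrac54]^k$, and let $K_0=\conv\bigl(\{(\vx',1):\vx'\in S\}\cup\{(\vx',-1):\vx'\in -S\}\bigr)\subseteq\R^{k+1}$, viewed inside $\R^n$. Then $K_0$ is origin-symmetric; its central slice $K_0\cap L$ equals $\tfrac12(S-S)\times\{\vnull\}=[-\tfrac34,\tfrac34]^k\times\{\vnull\}$ and contains only the origin, whereas $K_0\cap(\ve_{k+1}+L)$ equals the top face $S\times\{1\}$ and contains the $2^k$ lattice points $\{0,1\}^k\times\{1\}$. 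Finally, replacing $K_0$ by $K_0+[-\varepsilon,\varepsilon]^n$ for a sufficiently small $\varepsilon>0$ makes it a full-dimensional convex body without altering either lattice point count, giving equality in the lemma.

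The only delicate point is the constant-$2^k$ bookkeeping: one must verify that $\Lambda/2\Lambda$ really has exactly $2^k$ elements (immediate from $\Lambda\cong\Z^k$) and that in the example the fattening parameter $\varepsilon$ is chosen small enough to avoid spurious lattice points in the central slice. Neither is a genuine obstacle; the whole content of the lemma is the observation that midpoints of pairs of slice-lattice-points that are congruent modulo $2\Lambda$ land back inside the central slice.
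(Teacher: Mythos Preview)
Your proof is correct and follows the same approach as the paper: partition the lattice points of the translated slice into at most $2^k$ residue classes modulo $2\Lambda$ and use origin-symmetry to send each class into the central slice via midpoints. The only cosmetic differences are that the paper phrases the partition in terms of cosets of $\Z^n/2\Z^n$ and bounds each class slightly more sharply (by $\tfrac12(\LE(K\cap L)+1)$, using the sum-set inequality $|D-D|\ge 2|D|-1$ rather than your direct injection), and its sharpness example is the analogous construction $K=\conv\bigl(\pm([0,1]^{n-1}\times\{1\})\bigr)$.
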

For various discrete versions of the classcial Brunn-Minkowski theorem, which in
particular implies Brunn's concavity principle we refer to
\cite{dmink-gg, slomka, hernandez, iglesias}.

From \eqref{eq:rev-meyer-inhom} we get for $K\in\Ko^n$ immediately an inequality
of the type
\begin{equation}
  \LE(K)^{\frac{n-1}{n}}\leq c_n  \max_{\vt\in\Z^n,\vu\in\Z^n\setminus\{\vnull\}}\LE(K\cap(\vt+\vu^\perp)).
\label{eq:disslisym}  
\end{equation}
with $c_n=O(n^2)$. 
This may be regarded as a lattice version of the well-known \emph{slicing problem}
for volumes asking for the correct order of a constant $c$ such that
for all  centered convex bodies $K\in\K^n$ there exists a 
$\vu\in\R^n\setminus\{\vnull\}$  
such that
\begin{equation} 
  \vol(K)^\frac{n-1}{n} \leq c\, \vol(K\cap \vu^\perp).
\label{eq:volsli}   
\end{equation} 
To this day, the best known bound $c\leq n^{1/4}$ is due to Klartag
\cite{klartag}. 
With respect to the discrete slicing inequality \eqref{eq:disslisym}
we prove
\begin{theorem} Let $K\in\K^n$. 
    Then 
\begin{equation}
\label{eq:disslyasym}   
 \LE(K)^{\frac{n-1}{n}}\leq O(n^2)
  \max_{\vt\in\Z^n,\vu\in\Z^n\setminus\{\vnull\}}\LE(K\cap(\vt+\vu^\perp)).
\end{equation}
If $K\in\Ko^n$, the constant can be replaced by $O(n)$.
\label{thm:dslicing}
\end{theorem}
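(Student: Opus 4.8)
The plan is to reduce the discrete slicing inequality to its continuous counterpart by passing to a suitable auxiliary body built from unit cubes, and then to apply Klartag's bound \eqref{eq:volsli} together with a discretization of the resulting hyperplane. Given $K\in\K^n$, set $\overline{K} = (K\cap\Z^n) + [-\tfrac12,\tfrac12]^n$, the union of the unit cubes centred at the lattice points of $K$. Then $\vol(\overline K) = \LE(K)$, and $\overline K$ is contained in a bounded enlargement of $K$, which will be needed to control the geometry. First I would translate so that the centroid of $\overline K$ is at the origin (in the symmetric case one keeps the symmetry and uses $\overline K$ directly), and apply Klartag's theorem \eqref{eq:volsli}: there is a unit vector $\vu\in\R^n$ with $\vol(\overline K)^{(n-1)/n}\le c\,n^{1/4}\,\vol(\overline K\cap \vu^\perp)$, and in the origin-symmetric case the central slice is already the maximal one by Brunn's principle.

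The second step is to discretize $\vu^\perp$: the slice $\overline K\cap(\vt+\vu^\perp)$ is an $(n-1)$-dimensional convex body whose $(n-1)$-volume we must compare to a lattice-point count along a genuine lattice hyperplane $\vt'+\vw^\perp$ with $\vw\in\Z^n\setminus\{\vnull\}$. Here I would invoke a Loomis--Whitney/projection estimate in the hyperplane together with a pigeonhole argument: integrating $\vol_{n-1}(\overline K\cap(s\vu+\vu^\perp))$ over $s$ recovers $\vol(\overline K)=\LE(K)$, so some affine slice has $(n-1)$-volume at least $\vol(\overline K)/\mathrm{width}_\vu(\overline K)$; approximating $\vu$ by a rational direction of bounded denominator and covering the lattice hyperplane $\vt'+\vw^\perp$ by translates of the slab between consecutive lattice hyperplanes, one converts the volume of the real slice into a lattice-point count on $\vt'+\vw^\perp\cap\Z^n$, losing only a factor polynomial in $n$ (the standard ``$\vol_{n-1}\le c^{n}\,\LE$ after shrinking'' type estimates, combined with the fact that a hyperplane section of a box of volume one has $(n-1)$-volume at most $\sqrt n$). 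Combining the two displayed inequalities and absorbing constants gives the exponent $n^2$: one power of $n$ from the discretization losses, one from $n^{1/4}$ being generously bounded, and the slack in the covering step. In the symmetric case the centroid step is free and the maximal slice is central, which saves a factor of $n$ and yields the claimed $O(n)$.

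The main obstacle is the discretization step: converting $\vol_{n-1}$ of a section of $\overline K$ by an arbitrary real hyperplane into $\LE$ of a section by an integral hyperplane, while keeping the loss polynomial in $n$. Naively approximating $\vu$ by a rational vector can blow up the denominator, which would ruin the bound; the fix is to not insist on approximating $\vu$ at all, but rather to choose among the $n$ coordinate directions (or among $O(n)$ well-chosen lattice directions) the one along which the section count is largest, bounding the real slice's $(n-1)$-volume by the geometric mean of its coordinate projections via \eqref{eq:lw}, and then passing from each projection back to a lattice count using \eqref{eq:lw_lat} on a lattice hyperplane — this routes everything through integral hyperplanes from the start and is where the $O(n^2)$ (resp.\ $O(n)$) finally crystallizes. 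A secondary technical point is handling the non-symmetric centering: \eqref{eq:volsli} and Klartag's bound require a centered body, so one must verify that centering $\overline K$ does not move its lattice structure incompatibly, which is handled by noting that all estimates are translation-covariant in the appropriate sense and that only the real-volume inequality, not the lattice count, sees the centering.
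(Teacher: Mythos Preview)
Your proposal has two genuine gaps.

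First, the auxiliary set $\overline{K}=(K\cap\Z^n)+[-\tfrac12,\tfrac12]^n$ is a union of unit cubes and is \emph{not convex}, so Klartag's slicing bound \eqref{eq:volsli} does not apply to it. If you replace $\overline{K}$ by its convex hull or by $K+[-\tfrac12,\tfrac12]^n$ you recover convexity, but you lose the exact identity $\vol=\LE(K)$, and more importantly you are then stuck with the second (and real) obstacle below.

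Second, and more seriously, the ``discretization step'' is the whole problem, and your sketch does not solve it. Klartag's direction $\vu$ is a real unit vector; there is no mechanism in your argument that produces an \emph{integral} direction $\vw\in\Z^n$ whose lattice section $\LE(K\cap(\vt'+\vw^\perp))$ is comparable, up to a polynomial in $n$, to $\vol_{n-1}(\overline K\cap\vu^\perp)$. Your fallback --- abandon $\vu$ and pick the best among the coordinate (or $O(n)$ fixed lattice) directions --- cannot work in general: the coordinate widths of $K$ can all be of order $\LE(K)$ even when $\LE(K)^{1/n}$ is small (think of a long segment in a diagonal direction, fattened slightly), so pigeonholing over coordinate hyperplanes gives no useful bound. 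Invoking \eqref{eq:lw} does not help either, since Loomis--Whitney controls a geometric mean of projections, not the maximal lattice section in a direction you still have to identify. Finally, your explanation for the saving in the symmetric case (``the maximal slice is central'') is false for lattice points --- this is exactly the content of Lemma~\ref{lem:brunn}.

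The paper's argument bypasses continuous slicing entirely and works inside Geometry of Numbers. One takes as slicing direction a vector $\vy\in\Z^n$ realizing $\lambda^\star:=\lambda_1\big((K-K)^\star\big)$; then a direct pigeonhole gives $\LE(K)\le 3\lambda^\star\cdot\LE\big(K\cap(\vt+\vy^\perp)\big)$ for some $\vt\in\Z^n$. The work is to bound $\lambda^\star$: if a half-scaled translate of $K$ misses $\Z^n$, the flatness theorem gives $\lambda^\star=O(n^{3/2})$; otherwise one shows $\frac{1}{2(n+1)}(K-K)$ fits inside a lattice translate of $K$, which combined with Minkowski's second theorem, the Mahler-type bound \eqref{eq:mahler}, and Lemma~\ref{lemma:elementary-trafos} yields $(\lambda^\star)^n\le n!\,O(1)^n\LE(K-K)\le (n!)^2 O(1)^n\LE(K)$, whence the $O(n^2)$. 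In the symmetric case one avoids the difference body altogether and gains a factor $n$.
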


Finally, we will give an example that shows that all the constants in \eqref{eq:rev-meyer-hom}, \eqref{eq:rev-meyer-inhom}, \eqref{eq:disslisym} and \eqref{eq:disslyasym} must be at least of order $\sqrt{n}$.

\begin{theorem}
\label{thm:lower-slicing-bound}
For $n\in\N$ there exists a sequence of symmetric convex bodies $(K_j)_{j\in\N}\subseteq\Ko^n$ such that
\begin{equation*}
\limsup_{j\rightarrow\infty} \frac{\LE(K_j)^{\frac{n-1}{n}}}{\sup_H \LE(K_j\cap H)} \geq c\sqrt{n},
\end{equation*}
where $H$ ranges over all affine hyperplanes in $\R^n$ and $c>0$ is a universal constant.
\end{theorem}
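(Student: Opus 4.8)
The plan is to produce an explicit family of symmetric convex bodies whose lattice point count is large while every affine hyperplane slice is small, and the natural candidate is a (suitably dilated and possibly sheared) cross-polytope, since equality in Meyer's inequality \eqref{eq:meyer} is attained by generalized cross-polytopes and the constant there is asymptotically $1/\mathrm{e}$, markedly worse than the $n^{1/4}$ of the slicing problem. Concretely, I would take $K_j = j\cdot C_n^\ast$ where $C_n^\ast = \conv\{\pm \ve_1,\dots,\pm\ve_n\}$ is the standard cross-polytope, and study the asymptotics as $j\to\infty$. For the denominator one needs an upper bound on $\LE(K_j\cap H)$ over all affine hyperplanes $H$: since $K_j\cap H$ is an at most $(n-1)$-dimensional convex body, its lattice point count is comparable (up to the usual $O(1)^{n}$ Ehrhart-type error, which is harmless here because $n$ is fixed and $j\to\infty$) to its $(n-1)$-volume plus lower order terms, so it suffices to bound $\vol_{n-1}(K_j\cap H)$. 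By Brunn's concavity principle for the symmetric body $K_j$ the maximal slice volume over \emph{all} affine hyperplanes orthogonal to a fixed direction is the central one, so $\sup_H \vol_{n-1}(K_j\cap H) = j^{n-1}\sup_{\vu}\vol_{n-1}(C_n^\ast\cap \vu^\perp)$, and the maximal central hyperplane section of the regular cross-polytope is a classical quantity of order $\frac{\sqrt{2}^{\,n-1}}{(n-1)!}\cdot c^{n}$ for an explicit constant (the extremal direction being, up to symmetry, a coordinate direction or a diagonal one; in either case the value is known exactly).

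For the numerator, $\LE(K_j) = \LE(j C_n^\ast)$ is asymptotically $\vol(C_n^\ast)\,j^{n} = \frac{2^n}{n!}\,j^{n}$ as $j\to\infty$, again by Ehrhart's theorem (or simply by the standard volume-approximation of lattice points of a dilated fixed body). Hence
\begin{equation*}
\frac{\LE(K_j)^{\frac{n-1}{n}}}{\sup_H\LE(K_j\cap H)} \sim \frac{\big(\tfrac{2^n}{n!}\big)^{\frac{n-1}{n}} j^{\,n-1}}{\big(\sup_{\vu}\vol_{n-1}(C_n^\ast\cap\vu^\perp)\big)\, j^{\,n-1}} = \frac{\big(\tfrac{2^n}{n!}\big)^{\frac{n-1}{n}}}{\sup_{\vu}\vol_{n-1}(C_n^\ast\cap\vu^\perp)},
\end{equation*}
so the $j$-dependence cancels and the $\limsup$ equals a purely dimensional quantity. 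The final step is to check that this quantity is $\geq c\sqrt{n}$: using $n!^{1/n}\sim n/\mathrm{e}$ one gets $\big(2^n/n!\big)^{(n-1)/n}\approx (2\mathrm{e}/n)^{n-1}\cdot n^{o(n)}$ roughly, while the maximal cross-polytope section is of the order $\sqrt{2}^{\,n-1}/(n-1)!\approx (\sqrt{2}\mathrm{e}/n)^{n-1}$; taking the ratio, the $1/(n-1)!$-type terms cancel and one is left with $(2/\sqrt2)^{\,n-1}\cdot(\text{polynomial corrections}) = \sqrt2^{\,n-1}\cdot(\dots)$ — so in fact the ratio grows exponentially, which is far stronger than $c\sqrt n$. (If a cleaner dimension dependence is wanted one can instead take $K_j$ to be an anisotropic cross-polytope $j\cdot\conv\{\pm\ve_1,\pm\ve_2,\dots,\pm\ve_{n-1},\pm a\,\ve_n\}$ with $a$ chosen as a second parameter, or a generalized cross-polytope, to tune the gap down to exactly $\sqrt n$; but for the stated lower bound a crude estimate already suffices.)

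The main obstacle is the precise evaluation — or at least a clean lower bound — of $\sup_{\vu\in S^{n-1}}\vol_{n-1}(C_n^\ast\cap\vu^\perp)$, the maximal central hyperplane section of the regular cross-polytope; this is a known but nontrivial extremal problem, and one should cite the relevant computation (the maximum is attained at a diagonal direction $\vu=\tfrac{1}{\sqrt n}(1,\dots,1)$ up to sign, giving section volume $\tfrac{\sqrt n}{(n-1)!}\big(\tfrac{2(n-1)}{n}\big)^{(n-1)/2}\cdot\sqrt2^{\,?}$ — the exact constant to be inserted from the literature). A secondary, purely technical point is justifying that replacing $\LE$ by $\vol$ in both numerator and denominator only costs factors that are bounded uniformly in $j$ for fixed $n$: this follows because for a fixed convex body $M$ one has $\LE(jM) = \vol(M)j^n + O(j^{n-1})$ and likewise on each fixed rational hyperplane, and the finitely many irrational-direction slices can be handled by a compactness/continuity argument on the space of hyperplane directions together with the monotonicity $\LE(K\cap H)\le \LE((K+[-1,1]^n)\cap H')$-type covering bounds — none of which affects the limit since we only claim a $\limsup$.
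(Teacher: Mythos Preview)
Your proposal contains a genuine error in the estimate of the maximal cross-polytope section. The coordinate section $C_n^\ast\cap\ve_n^\perp$ is the $(n-1)$-dimensional cross-polytope $C_{n-1}^\ast$, whose volume is $2^{n-1}/(n-1)!$, not $\sqrt{2}^{\,n-1}/(n-1)!$; hence the maximum over all directions is at least $2^{n-1}/(n-1)!$. With this correction your limiting ratio becomes
\[
\frac{\big(2^n/n!\big)^{(n-1)/n}}{2^{n-1}/(n-1)!}=\frac{(n-1)!}{(n!)^{(n-1)/n}}=\frac{(n!)^{1/n}}{n}\sim\frac{1}{\mathrm e},
\]
a constant independent of $n$ --- exactly the Meyer constant, as expected since the cross-polytope is the equality case of \eqref{eq:meyer}. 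So the cross-polytope (isotropic or anisotropic) gives only an $O(1)$ lower bound, not $c\sqrt n$, and certainly nothing exponential.

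The defect is structural rather than a matter of choosing a better body. Your own volume-approximation step shows that for any fixed $K\in\Ko^n$ and the standard lattice $\Z^n$, the quantity $\LE(jK)^{(n-1)/n}/\sup_H\LE(jK\cap H)$ tends, as $j\to\infty$, to $\vol(K)^{(n-1)/n}/\sup_{\vu}\vol_{n-1}(K\cap\vu^\perp)$. By the result of Alonso-Guti\'errez and Brazitikos cited after \eqref{eq:rev_meyer} this last ratio is at most a universal constant times $L_K\le cn^{1/4}$, so reaching $\sqrt n$ along this route would require a body with isotropic constant of order $\sqrt n$, which is not known (and conjecturally impossible). The paper therefore does not work with $\Z^n$ and a clever body at all: it fixes the body to be a Euclidean ball and instead chooses a self-polar Conway--Thompson lattice $\Lambda$ with $\lambda_1(B_n,\Lambda)\ge c\sqrt n$. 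Self-polarity gives $\det\Lambda=1$ and $\det(\Lambda\cap\va^\perp)=|\va|\ge c\sqrt n$ for every primitive $\va\in\Lambda^\star=\Lambda$, so every lattice hyperplane is arithmetically sparse; this, not any feature of the body, is what forces each hyperplane section to contain roughly a $1/\sqrt n$ fraction of the ``expected'' number of lattice points. The technical work in the paper is precisely the step you dismiss as secondary: extracting a subsequence $r_j\to\infty$ along which the maximizing hyperplane stabilizes, so that \eqref{eq:vol-appr} can be applied on both sides.
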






We want to remark that the slicing problem \eqref{eq:volsli} has been
extensively studied also for other measures. For instance, Koldobsky
\cite{koldobsky} proved
\begin{equation*}
    \mu(K) \leq  O(\sqrt{n}) \max_{\vx\in\R^n\setminus\{\vnull\}} \mu(K\cap \vx^\perp) \vol(K)^{1/n}
\end{equation*}
for measures $\mu$ that admit a continuous density (see also
\cite{KlartagLivshyts2018} and the
references within).
Chasapis, Giannopoulos and Liakopoulos \cite{cgl} extended this result
to lower dimensional sections  of not necessarily symmetric convex
bodies
\begin{equation}
\label{eq:cgl}
\mu(K) \leq  O(k)^{(n-k)/2} \max_F \mu(K\cap F) \vol(K)^{\frac{n-k}{n}},
\end{equation}
where $F$ ranges over all $k$--dimensional subspaces of $\R^n$ and  
$\mu$ is a measure with a locally integrable density function.
 In \cite{ahz} the authors obtained an inequality similar to
 \eqref{eq:cgl} for $K\in\Ko^n$ and the lattice point enumerator 
 \begin{equation*}
   \LE(K) \leq O(n)^{n-k}  \max_H \LE(K\cap F) \vol(K)^{\frac{n-k}{n}}, 
 \end{equation*}
where $F$ ranges over all $k$--dimensional linear subspaces with $\dim
(F\cap\Z^n)=k$. 
 In the case $k=n-1$ and
 convex bodies of ``small'' volume,  Regev \cite{regev} proved via a
 probablistic approach such an
 inequality with the constant $O(n)$ instead of $O(n)^{n-1}$. 

Finally, we discuss a
reverse Loomis-Whitney inequality in the sprit of
\eqref{eq:rev_meyer}, 
Campi, Gritzmann, and Gronchi \cite{CampiGritzmannGronchi2018} showed that there exists a
constant $\tilde d_n\geq c/n$, where $c$ is an absolute constant,  such that  
\begin{equation}
\label{eq:rev_lw}
\vol(K)^{\frac{n-1}{n}} \geq \tilde d_n \left(\prod_{i=1}^n \vol(K|\vu_i^\perp)\right)^{\frac{1}{n}},
\end{equation} 
where again $\vu_1,\dots,\vu_n$ form an suitable orthonormal basis. In
\cite{rev_lw}, Koldobsky, Saroglou and Zvavitch showed that the
optimal order of the constant $\tilde d_n$ is of size $n^{-1/2}$.

In order to get a  meaningful discrete version of \eqref{eq:rev_lw}  we
have to project so that $\Z^n|\vu_i^\perp$ is again a lattice, i.e.,
$\vu_i\in\Z^n$,  and we have  
to count the lattice points of $K|\vu_i^\perp$ with respect to this
lattice.

\begin{theorem}
\label{thm:drev_lw}
Let $K\in\Ko^n$ with $\dim(K\cap\Z^n)=n$. There exist linearly
independent vectors $\vv_1,...,\vv_n\in\Z^n$ such that 
\begin{equation*} 
\LE(K)^{\frac{n-1}{n}} \geq \Omega(1)^n\Big(\prod_{i=1}^n
\LE_{\Z^n|\vv_i^\perp}(K|\vv_i^\perp)\Big)^{\frac{1}{n}},
\end{equation*}
where $\LE_{\Z^n|\vv_i^\perp}(K|\vv_i^\perp)=|(K|\vv_i^\perp)\cap ( \Z^n|\vv_i^\perp)|$.
\end{theorem}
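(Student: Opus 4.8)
The plan is to transfer the continuous reverse Loomis-Whitney inequality \eqref{eq:rev_lw} into the lattice setting by passing through volumes and then controlling the passage back and forth with standard lattice-point-vs-volume comparisons. First I would invoke \eqref{eq:rev_lw} (or rather its form guaranteeing a suitable orthonormal basis, applied after a suitable linear transformation) to produce an orthonormal basis $\vu_1,\dots,\vu_n$ with $\vol(K)^{(n-1)/n}\geq \Omega(1)^n\big(\prod_i\vol(K|\vu_i^\perp)\big)^{1/n}$; but since we need the projection directions to be lattice vectors so that each $\Z^n|\vv_i^\perp$ is a lattice, the actual first step is to choose a \emph{basis} $\vv_1,\dots,\vv_n$ of $\Z^n$ adapted to $K$. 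The natural choice is a reduced basis (Minkowski- or LLL-reduced) of $\Z^n$ with respect to the norm whose unit ball is $K$ (using $K\in\Ko^n$); such a basis has the property that the dual basis $\vv_1^\ast,\dots,\vv_n^\ast$ satisfies bounds linking $\|\vv_i^\ast\|_{K^\ast}$ to the successive minima, and — crucially — the projection lattice $\Z^n|\vv_i^\perp$ has a short basis, so that a fundamental cell of $\Z^n|\vv_i^\perp$ has $K$-diameter bounded by an absolute-constant power depending only on the reduction quality. This is what will let me compare lattice points of $K|\vv_i^\perp$ with its $(n-1)$-volume up to an $\Omega(1)^{n}$ factor.

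The second block of steps is the two-sided lattice-point/volume estimate. On the one hand I would use a lower bound of the form $\LE(K)\geq c^n\vol(K)$ valid for symmetric convex bodies containing a basis of $\Z^n$ (here $\dim(K\cap\Z^n)=n$ is exactly the hypothesis that makes this work, via a covering/tiling argument: translates of a fundamental parallelepiped spanned by lattice points inside $K$ cover a definite fraction of $(1-\varepsilon)K$, or one uses the elementary bound $\LE(K)\geq \vol(K)/\vol(\text{fundamental cell in }K)$ after the reduced-basis normalization). On the other hand, for each projection I need an \emph{upper} bound $\LE_{\Z^n|\vv_i^\perp}(K|\vv_i^\perp)\leq C^n\vol_{n-1}(K|\vv_i^\perp)/\det(\Z^n|\vv_i^\perp)$ — but $\det(\Z^n|\vv_i^\perp)=\det\Z^n/\ell_i$ where $\ell_i$ is the length of the primitive vector of $\Z^n$ in direction $\vv_i$ after the appropriate quotient, and these factors $\prod_i(1/\ell_i)$ must be reabsorbed. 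The cleanest route is to arrange, via the reduced basis, that all the relevant sublattice determinants are within an $\Omega(1)^n$ factor of $1$, and that $K|\vv_i^\perp$ contains a fundamental cell of $\Z^n|\vv_i^\perp$ (so the standard bound $|\vol_{n-1}(A)/\det\Lambda - \LE_\Lambda(A)|$ is multiplicatively controlled by the in-radius, which is again handled by reduction). Combining: $\LE(K)^{(n-1)/n}\geq c^{n-1}\vol(K)^{(n-1)/n}\geq c^{n-1}\Omega(1)^n\big(\prod_i\vol_{n-1}(K|\vv_i^\perp)\big)^{1/n}\geq \Omega(1)^n\big(\prod_i \LE_{\Z^n|\vv_i^\perp}(K|\vv_i^\perp)\big)^{1/n}$, where the last step uses the per-direction upper bound and the determinant bookkeeping.

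The main obstacle — and the part I would spend the most care on — is the interaction between the direction constraint ($\vv_i$ must be lattice vectors, so we cannot just take the orthonormal basis handed to us by \eqref{eq:rev_lw}) and the need for the projection lattices $\Z^n|\vv_i^\perp$ to have small determinant and nice short bases \emph{simultaneously} for all $i$. A reduced basis of $\Z^n$ with respect to $\|\cdot\|_K$ controls the primal directions, but the orthogonal projections $K|\vv_i^\perp$ are governed by the \emph{dual} reduced basis and the dual body $K^\ast$; reconciling these via transference bounds (e.g. $1\leq \lambda_i(K)\lambda_{n+1-i}(K^\ast)\leq n$ up to constants, or the Banaszczyk-type bounds) is where the genuine work lies, and it is also where the unspecified $\Omega(1)$ constant gets its precise value. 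A secondary technical point is ensuring each $K|\vv_i^\perp$ actually contains a fundamental domain of its projection lattice — if not, $\LE_{\Z^n|\vv_i^\perp}(K|\vv_i^\perp)$ could be as small as $1$ while the volume is tiny, but then that factor only helps the inequality, so this case is benign and can be dispatched separately.
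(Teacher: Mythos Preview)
Your plan has a real gap at the step where you pass from $\vol_{n-1}(K|\vv_i^\perp)$ back to $\LE_{\Z^n|\vv_i^\perp}(K|\vv_i^\perp)$. Even after you choose $\vv_1,\dots,\vv_n$ to be a basis of $\Z^n$ contained in $K$ (which the hypothesis $\dim(K\cap\Z^n)=n$ allows), all you know about $K|\vv_i^\perp$ relative to $\Lambda_i=\Z^n|\vv_i^\perp$ is that it contains a basis of $\Lambda_i$, namely $\{\vv_j|\vv_i^\perp:j\neq i\}$. A fundamental parallelepiped built from that basis lies only in $(n-1)\,K|\vv_i^\perp$, so the covering bound gives
\[
\LE_{\Lambda_i}(K|\vv_i^\perp)\;\leq\; n^{\,n-1}\,\frac{\vol_{n-1}(K|\vv_i^\perp)}{\det\Lambda_i},
\]
and via Minkowski's lower bound on $\prod\lambda_j$ one does no better than a factor $(n-1)!$. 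Multiplying over $i$ and taking the $n$th root leaves a loss of order $n^{\,n-1}$, which is \emph{not} $O(1)^n$; the reduced-basis heuristic you invoke (``a fundamental cell of $\Z^n|\vv_i^\perp$ has $K$-diameter bounded by an absolute constant'') is exactly the statement that fails here. There is a second, independent gap: the continuous reverse Loomis--Whitney inequality \eqref{eq:rev_lw} hands you an \emph{orthonormal} basis depending on $K$, and you give no mechanism for replacing it by a lattice basis while keeping the volume inequality; a linear change of coordinates moves the lattice, not the orthonormal frame.

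The paper avoids volumes for the projections entirely and argues combinatorially. The directions are the successive-minima vectors $\vv_i\in\lambda_i(K)K\cap\Z^n\subseteq K$. The key step is Lemma~\ref{lemma:preimages}: because $\vv_i\in K$, every fibre $(\vx+\R\vv_i)\cap K$ over a point $\vx\in K|\vv_i^\perp$ not too far from the origin has length at least $|\vv_i|$ and hence meets $\Z^n$; a coset argument then yields $\LE_{\Lambda_i}(K|\vv_i^\perp)\leq O(1)^n\,\big|(K\cap\Z^n)|\vv_i^\perp\big|$. With $Z=K\cap\Z^n$, the remaining task is the purely additive inequality $\prod_i\big|Z|\vv_i^\perp\big|\leq O(1)^{n^2}|Z|^{n-1}$, which the paper gets from $|Z+Z|\geq |Z\cap\R\vv_i|\cdot\big|Z|\vv_i^\perp\big|$ together with $|Z+Z|\leq\LE(2K)\leq 4^n|Z|$ (Lemma~\ref{lemma:elementary-trafos}) and the successive-minima upper bound \eqref{eq:upper-bhw}. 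None of these steps passes through $\vol_{n-1}(K|\vv_i^\perp)$, which is precisely what lets the constant stay singly exponential.
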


 The paper is organized as follows. In the next section we recall
 briefly some basic definitions and tools from Convex Geometry and
 Geometry of Numbers needed for  the proofs. Section 3 is devoted to
 the  slicing inequalities, in particular, we provide the
 proofs of the Theorems  \ref{thm:dmeyer} to  \ref{thm:lower-slicing-bound}
  as well as of Lemma \ref{lem:brunn}. The proof of
 Theorem \ref{thm:drev_lw} is given in Section 4, and in the final
 section we discuss improvements for the special class of unconditional bodies.



\section{Preliminaries}
\label{sec:preliminaries}

For a non-zero vector $\vx\in\R^n$, we denote its orthogonal
complement by $\vx^\perp$ and its Euclidean norm by $|\vx|$. We write $B_n=\{\vx\in\R^n:|\vx|\leq 1\}$ for the Euclidean unit ball and $C_n=[-1,1]^n$ for the symmetric cube. The \emph{interval} between $\vx,\vy\in\R^n$ is defined as $[\vx,\vy]=\{\lambda\vx+(1-\lambda)\vy:\lambda\in[0,1]\}$. For two sets $A,B\subseteq\R^n$ the
\emph{Minkowski addition} is defined elementwise, i.e.,
$A+B=\{\va+\vb:\va\in A, \vb\in B\}$. Similarly, for a scalar $\lambda\in\R$, one defines $\lambda A = \{\lambda\, \va: \va\in A\}$ and we write $-A=(-1)A$. 

A \emph{convex body} is a compact convex set $K\subseteq\R^n$. We say
that $K$ is \emph{origin symmetric}, if $K=-K$. The set of all convex
bodies is denoted by $\K^n$ and the set of all origin symmetric convex bodies
is denoted by $\K_o^n$. The \emph{support function} of a convex body
$K$ is defined for $\vx\in\R^n$ as $\suk_K(\vx)=\sup_{\vy\in K}
\langle \vx,\vy\rangle$. If the origin is an interior point of $K$,
the \emph{polar body} of $K$ is defined as
\begin{equation*}
K^\star = \{\vy \in\R^n: \langle \vx,\vy\rangle\leq 1, \forall \vx\in
K\}\in\K^n.
\end{equation*}
Moreover, for such $K$, the \emph{gauge function}
$|\cdot|_K:\R^n\to\R_{\geq 0}$ is defined by $|\vx|_K=\min\{\mu\geq
0: \vx\in\mu K\}$;  it is  $|\cdot|_{K^\star}=\suk_K(\cdot)$
\cite[Lemma 1.7.13]{schneider}. The \emph{volume} $\vol(K)$ of a convex body $K$ is its $n$-dimensional Lebesgue measure. If $K$ is contained in an $k$-dimensional space $F$, we denote by $\vol_k(K)$ its $k$-dimensional Lebesgue measure in $F$.  It is a famous open problem in Convex Geometry
to find the best possible lower bound on the volume product
$\vol(K)\vol(K^\star)$, where $K\in\Ko^n$. Mahler conjectured that it is $4^n/n!$ and it
is known to be true with  $\pi^n/n!$ \cite{kuperberg}. Here we will just use
\begin{equation}
         \vol(K)\vol(K^\star)\geq \frac{3^n}{n!}.
  \label{eq:mahler}
\end{equation}
For  $X\subseteq\R^n$, we denote the \emph{convex hull} of $X$ by $\conv(X)$. If $X$ is finite, $\conv(X)$ is called a \emph{polytope} and if, in addition, $X\subseteq\Z^n$, we call $\conv(X)$ a \emph{lattice polytope}. 

In general, a \emph{lattice} $\Lambda\subseteq\R^n$ is a discrete subgroup of
$\R^n$ of the form
\begin{equation*} 
  \Lambda = \Big\{\sum_{i=1}^k \alpha_i \vb_i: \alpha_i\in\Z\Big\},
\end{equation*} 
for some linearly independent $\vb_1,...,\vb_k\in\R^n$.  The set
$\{\vb_1,...,\vb_k\}$ is called a \emph{(lattice) basis} of
$\Lambda$ and one defines
$\det(\Lambda)=\vol_k([\vnull,\vb_1]+...+[\vnull,\vb_k])$. Sublattices of $\Lambda$ that arise as intersections $\Lambda\cap L$, where $L\subseteq\R^n$ is a linear subspace, are called \emph{primitive}. If $L$ fullfills $\dim L = \dim(\Lambda\cap L)$, $L$ is called a \emph{lattice subspace} of $\Lambda$. A point $\vv\in\Lambda\setminus\{\vnull\}$ is called primitive, if $\Z \vv$ is a primitive sublattice of $\Lambda$. The \emph{polar lattice} of $\Lambda$ is defined as 
\begin{equation*}
\Lambda^\star = \{\va\in\R^n: \langle \vb,\va\rangle \in\Z,\forall \vb\in\Lambda\}. 
\end{equation*} 
There are several duality relations between $\Lambda$ and
$\Lambda^\star$ of which we recall a few here
(cf.~e.g.~\cite[Proposition 1.3.4]{Martinet2003}). First of all the
determinants of $\Lambda$ and $\Lambda^\star$ are linked by the simple
formula $\det(\Lambda)\det(\Lambda^\star) = 1$. Further, a
$k$-dimensional subspace $L\subseteq\R^n$ is a lattice subspace of
$\Lambda$, if and only if $L^\perp$ is an $(n-k)$-dimensional lattice
subspace of $\Lambda^\star$. In particular, every lattice hyperplane
$H$ of $\Lambda$ possesses a primitive normal vector
$\vv^\star\in\Lambda^\star$ and the determinant of $\Lambda\cap H$ is
given by $|\vv^\star|\det\Lambda$. Moreover, the orthogonal projection $\Lambda^\star|L$ is a $k$-dimensional lattice and we have the following relation:
\begin{equation}
\label{eq:polarity1}
(\Lambda\cap L)^\star = \Lambda^\star|L, 
\end{equation}
where on the left-hand side, the polarity operation is taken within the space $L$.

For a set $A\subset\R^n$ we denote by $|A|$ its cardinality and 
for the \emph{lattice point enumerator} of a set $A\subseteq\R^n$ with
respect to a lattice $\Lambda\subseteq\R^n$ we write $\LE_\Lambda
A=|A\cap\Lambda|$. If $\Lambda=\Z^n$, we just write $\LE A$ instead of
$\LE_{\Z^n} A$.

Minkowski established via his successive minima  various fundamental
relations between the volume and lattice point properties of a
symmetric convex body. For $K\in\K_o^n$ and a lattice $\Lambda\subseteq\R^n$, both
full--dimensional, the $i$th \emph{successive minimum} is defined as
\begin{equation*} 
\lambda_i(K,\Lambda)=\min\big\{\lambda>0:\dim(\lambda K \cap \Lambda)
= i\big\},
\end{equation*} 
where $1\leq i \leq n$; we abbreviate $\lambda_i(K) = \lambda_i(K,\Z^n)$.
 Among other Minkowski proved \cite[Ch.VIII, Theorem V]{cassels}
\begin{equation}
  \label{eq:minkowski}
  \frac{2^n}{n!}\det(\Lambda)\leq
  \lambda_1(K,\Lambda)\cdot\ldots\cdot\lambda_n(K,\Lambda)\vol(K)\leq 2^n\det(\Lambda).
\end{equation}
Here we also need a discrete variant of the upper bound going back to Betke et al.~\cite{bhw}. For $K\in\Ko^n$ they proved  
\begin{equation*}\label{eq:lower-bhw}
\LE(K)\leq \prod_{i=1}^n \Big(\frac{2i}{\lambda_i(K)}+1\Big),
\end{equation*}
which was later improved in
\cite{Henk2002}. The currently best known upper bound is due to Malikiosis \cite{Malikiosis2010},
which, in particular, implies for $K\in\Ko^n$ 
\begin{equation}\label{eq:upper-bhw}
  \LE(K)\leq \sqrt{3}^{n-1}\prod_{i=1}^n  \Big\lfloor\frac{2}{\lambda_i(K)}+1\Big\rfloor.
\end{equation}
In general, linearly independent lattice points $\va_i\in\Lambda$, $1\leq
i\leq n$, corresponding to the successive minima, i.e.,
$\va_i\in\lambda_i(K,\Lambda)\,K$, do not form a basis of $\Lambda$. It was shown
by Mahler (cf.~\cite[Sec. 2.10]{geometryofnumbers}), however, that
there exists 
a lattice basis $\vb_1,...,\vb_n\in\Lambda$ such that
\begin{equation} 
  |\vb_i|_K \leq \max\left\{1,\frac{i}{2}\right\} \lambda_i(K,\Lambda).
\label{eq:thm:basis}  
\end{equation}
Next we also need a lower bound on the product of the successive minima
of a convex body $K$ with the ones of the polar body $K^\star$ (cf.~\cite[Theorem 23.2]{Gruber2007}):
\begin{equation}
  \label{eq:5}
  \lambda_i(K^\star,\Lambda^\star)\lambda_{n+1-i}(K,\Lambda)\geq 1. 
\end{equation}

Regarding upper and lower bounds on the volume in terms of the lattice
point enumerator we mention here two results. First, van der Corput
\cite[Ch.2, Theorem 7.1]{geometryofnumbers} proved for $K\in\Ko^n$
\begin{equation}
\label{eq:vdcorput}
\vol(K)\leq \big(2^{n-1} (\LE_\Lambda(K)+1)\big)\det\Lambda,
\end{equation}
and Blichfeldt \cite{Blichfeldt1921} showed for $K\in\K^n$ with 
$\dim(K\cap\Lambda)=n$
\begin{equation}
\label{eq:blichfeldt}
\vol(K)\geq \frac{1}{n!}\Big(\LE_\Lambda(K)-n\Big)\det\Lambda.
\end{equation}
Finally, the volume and the lattice point enumerator are equivalent ``on a large scale'', i.e., for any $n$-dimensional convex body $K\subseteq\R^n$, $n$-dimensional lattice $\Lambda\subseteq\R^n$ and $\vt\in\R^n$ one has (cf.\ e.g.\ \cite[Lemma 3.22]{ad-comb})
\begin{equation}
\label{eq:vol-appr}
\lim_{r\rightarrow\infty} \frac{\LE_{\vt+\Lambda}(rK)}{r^n}=\frac{\vol(K)}{\det\Lambda}
\end{equation}

In this paper we will mostly deal with the standard
lattice 
$\Z^n$ since all the results can easily be generalized to arbitrary
lattices.

For more information on Geometry of Numbers and/or Convex Geometry we
refer to the books \cite{ Gardner2006, Gruber2007, geometryofnumbers, schneider}.

\section{Slicing inequalities for the lattice point enumerator}
\label{sec:slicing} 

First, we show that the answer to the question
\eqref{eq:dmeyer-question} of Gardner et al.~is in general negative, if the dimension is greater than
2. 
\begin{proposition}
\label{prop:simplex}
Let $n\geq 3$ be fixed.  There exists no positive number $c>0$ such
that for all $K\in\K^n$.
\begin{equation}
\label{eq:noc}
\LE(K)^{\frac{n-1}{n}} \geq c \Big(\prod_{i=1}^n \LE(K\cap\ve_i^\perp)\Big)^\frac{1}{n}.
\end{equation}
\end{proposition}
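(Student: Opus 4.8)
The plan is to exhibit, for each $n\ge 3$, a one-parameter family of convex bodies $K_t\in\K^n$ for which $\LE(K_t)$ stays bounded (or grows slowly) while the product $\prod_{i=1}^n\LE(K_t\cap\ve_i^\perp)$ grows without bound, so that no constant $c>0$ can satisfy \eqref{eq:noc}. The natural candidate is a simplex whose facets are ``flattened'' against the coordinate hyperplanes. Concretely, consider $K_t=\conv\{\vnull,\, t\ve_1,\, t\ve_2,\, \ve_3,\dots,\ve_n\}$, or a slight perturbation thereof, where $t\to\infty$. The point is that $K_t$ is a very thin sliver: it is contained in a slab of bounded width in several directions, so $\LE(K_t)$ is polynomially bounded in $t$ (in fact one can arrange it to be $O(t)$), whereas the slice $K_t\cap\ve_n^\perp$ (and similarly the slices by $\ve_3^\perp,\dots,\ve_{n-1}^\perp$) is essentially a two-dimensional triangle with legs of length $t$, hence contains $\Omega(t^2)$ lattice points. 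Thus the product of the slice counts is at least $\Omega(t^{2(n-2)})$ while $\LE(K_t)^{(n-1)/n}=O(t^{(n-1)/n})$, and since $2(n-2)/n>(n-1)/n$ for $n\ge 3$, the ratio tends to $0$.

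The key steps, in order, are: (i) fix the explicit family $K_t$ and check it is a genuine $n$-dimensional convex body for all large $t$; (ii) bound $\LE(K_t)$ from above — here one observes that $K_t$ lies in a region where, say, the coordinates $x_3,\dots,x_n$ are confined to $[0,1]$ and $x_1+x_2\le t$ with $x_1,x_2\ge 0$ subject to a linear constraint tying them to $x_3,\dots,x_n$, which forces the number of lattice points to be $O(t)$ (or at worst $O(t^{n-2})$, which still suffices); (iii) bound each of the slice counts $\LE(K_t\cap\ve_i^\perp)$ from below for $i=3,\dots,n$ by identifying the slice as a triangle of area $\Omega(t^2)$ and invoking a crude lower bound such as $\LE_\Lambda(M)\ge \vol(M)/\det\Lambda - (\text{lower order})$ or simply counting lattice points directly in the triangle; (iv) assemble the estimates and let $t\to\infty$ to contradict \eqref{eq:noc} for any fixed $c$.

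The main obstacle is the bookkeeping in steps (ii) and (iii): one must choose the body $K_t$ carefully enough that the \emph{whole} body is lattice-point-poor while \emph{several} of its coordinate slices are lattice-point-rich, and the exponents must be balanced so that the product over all $n$ slices genuinely beats $\LE(K_t)^{(n-1)/n}$. A clean way to handle this is to make $n-1$ of the $n$ coordinate slices large and only worry that the remaining slice and the full body are small; since a slice containing at least one lattice point already contributes a factor $\ge 1$, it is enough that $n-1$ of the factors each grow like a positive power of $t$ while $\LE(K_t)$ grows slower, which the simplex construction above delivers. An alternative, possibly cleaner, route is to take $K_t$ to be a long thin lattice simplex obtained by shearing a fixed unimodular simplex, using \eqref{eq:vol-appr} or Blichfeldt's inequality \eqref{eq:blichfeldt} to control the slice counts from below and an elementary slab argument for the upper bound on $\LE(K_t)$; either way the heart of the matter is the exponent comparison $2(n-2) > n-1 \iff n\ge 3$, which is exactly why the statement fails precisely for $n\ge 3$.
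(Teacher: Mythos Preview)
Your explicit example $K_t=\conv\{\vnull, t\ve_1, t\ve_2, \ve_3,\dots,\ve_n\}$ does \emph{not} work: it fails precisely at step (ii). The face $\conv\{\vnull, t\ve_1, t\ve_2\}$ sits inside $K_t$ (in the coordinate plane $x_3=\dots=x_n=0$) and already contains $\binom{t+2}{2}\sim t^2/2$ lattice points, so $\LE(K_t)=\Theta(t^2)$, not $O(t)$. Your ``linear constraint tying $x_1,x_2$ to $x_3,\dots,x_n$'' is just the facet inequality $x_1/t+x_2/t+x_3+\dots+x_n\le 1$, which on the level set $x_3=\dots=x_n=0$ imposes no tie at all. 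A direct count then gives
\[
\LE(K_t)^{n-1}\sim t^{2(n-1)},\qquad \prod_{i=1}^n\LE(K_t\cap\ve_i^\perp)\sim t\cdot t\cdot (t^2)^{n-2}=t^{2(n-1)},
\]
so the ratio in \eqref{eq:noc} is asymptotically a positive constant, and nothing is contradicted. Your fallback claim that ``at worst $O(t^{n-2})$ still suffices'' is also wrong: for that bound you would need $(n-2)(n-1)<2(n-2)$, i.e.\ $n<3$.

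The missing idea is that the two long directions must be \emph{skew}, not adjacent. If the long segments share a vertex they span a $2$-face with $\Theta(t^2)$ lattice points inside $K_t$, which is exactly what kills your example. The paper's construction (in dimension $3$) takes the simplex with vertices $\vnull,\ \ve_1,\ \ve_1+k\ve_2,\ k\ve_3$: the long edge $[\vnull,k\ve_3]$ lies in $\ve_1^\perp\cap\ve_2^\perp$ and the other long edge $[\ve_1,\ve_1+k\ve_2]$ lies in $\ve_3^\perp$, but they do not meet, so the full simplex has only $2(k+1)$ lattice points while each of the three coordinate slices contains a segment with $\sim k$ lattice points. This gives $\LE(T_k)^{2/3}/\big(\prod_i\LE(T_k\cap\ve_i^\perp)\big)^{1/3}=O(k^{-1/3})\to 0$. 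For $n\ge 4$ one simply cones over this $3$-dimensional example with $\ve_4,\dots,\ve_n$. Your outline becomes correct once you replace $K_t$ by such a skew-edge simplex; the rest of your steps (iii)--(iv) are then straightforward.
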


\begin{proof}
 We first prove it for $n=3$.  For an integer $r\in\N$, let $T_k$ be the simplex with
 vertices $\{\vnull, \ve_1, \ve_1+k\,\ve_2, k\,\ve_3\}$ (see Figure
 \ref{fig:simplex}).
\begin{figure}[htb] 
\centering
\includegraphics[width = .3\textwidth]{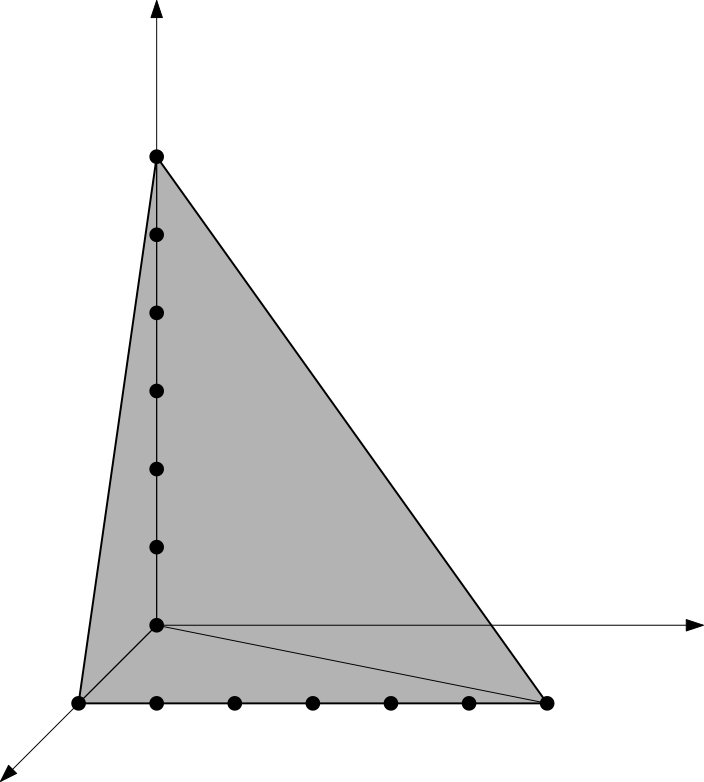}
\caption{The simplex $T_r$.} 
\label{fig:simplex}
\end{figure}
 Then, 
$\LE(T_k) = 2(k+1)$ and also $\LE(T_k\cap \ve_1^\perp) = k+1$ and
$\LE(T_k\cap \ve_2^\perp)=\LE(T_k\cap \ve_3^\perp)=k+2$.  Thus
\begin{equation*}
       \frac{\LE(T_k)^{\frac{2}{3}}}{\Big(\prod_{i=1}^3
         \LE(T_k\cap\ve_i^\perp)\Big)^\frac{1}{3}} \leq
       2^{\frac{2}{3}}\frac{(k+1)^{\frac{2}{3}}}{k+1} = 2^{\frac{2}{3}}(k+1)^{\frac{-1}{3}}
     \end{equation*}
     and so for $k\to\infty$ the left hand sides tends to $0$.

For $n\geq 4$ we just can consider, e.g.,  the simplices $\conv(T_k\cup\{\ve_4,\dots,\ve_n\})$.     
\end{proof}

Roughly speaking, the simplex $T_k$ from above falsifies
\eqref{eq:noc} because the two skew segments $[\vnull,k\ve_3]$ and
$[\ve_1, \ve_1+k\,\ve_2]$ are both "long", but do not generate any
additional points in $T_k$. Such a construction is not possible in the
symmetric case. In fact, if $K\in\Ko^n$ possesses $2h+1$ points on the
coordinate axis $\R\ve_n$, any interior point $\vv\in K\cap\ve_n^\perp$ will
contribute $O_{v,n}(h)$ points to $K$. Here, $O_{v,n}$ hides a constant that only depends on $v$ and $n$.
However, unlike the simplex above, a \emph{symmetric} convex body always contains at least $\LE(K)/3^n$--many interior lattice points (see \cite{merino-schymura}). Motivated by this heuristic, we conjecture the following polytopes to be extremal in \eqref{eq:noc}, when restricted to $\Ko^n$.

\begin{example}
\label{ex:berg} 
For an integer $h\in\N$ let
$K_h=\conv\big((C_{n-1}\times\{\vnull\})\cup\{\pm h\,\ve_n\}\big)$ be
a  double pyramid over the $(n-1)$-dimensional cube
$C_{n-1}=[-1,1]^{n-1}$ (see Figure \ref{fig:berg}).
\begin{figure}[hbt]
\centering
\includegraphics[width=.3\textwidth]{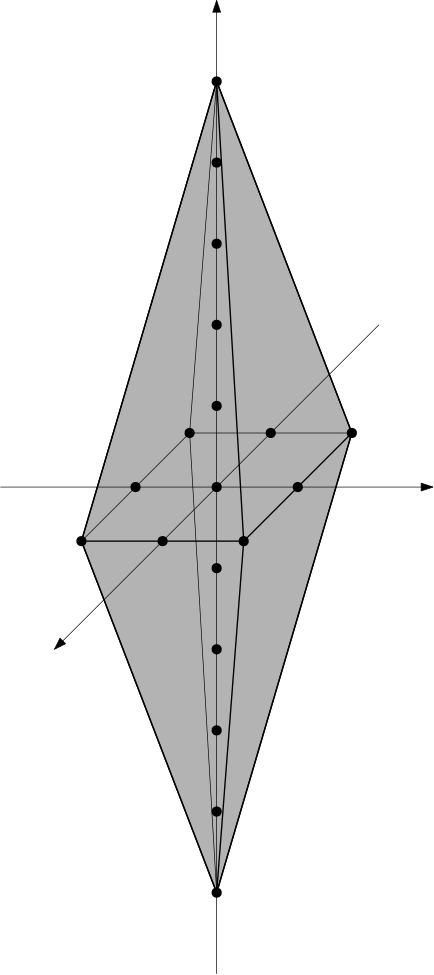}
\caption{The double pyramid $K_h$.}
\label{fig:berg}
\end{figure}
Then $\LE(K_h) =
3^{n-1} +2h$, $\LE(K_h\cap \ve_i^\perp) = 3^{n-2}+2h$, for $1\leq
i<n$, and $\LE(K_h\cap \ve_n^\perp) = 3^{n-1}$. Thus, 
\begin{equation*} 
\lim_{h\rightarrow\infty}  \frac{\LE(K_h)^{n-1}}{\prod_{i=1}^n \LE(K_h\cap
  \ve_i^\perp)} = \frac{1}{3^{n-1}},
\end{equation*}
which is why we conjecture the optimal constant in \eqref{eq:noc} to be $3^{-\frac{n-1}{n}} \approx 1/3$.
\end{example}

In order to prove the lower bound in Theorem \ref{thm:dmeyer}, we would first like to
understand the behaviour of $\LE(\cdot)$ with respect to affine
transformations. An important tool for this is the \emph{index} of a
sublattice $\Lambda^\prime\subseteq\Lambda$, i.e., the number of
different cosets $\va+\Lambda^\prime$, $\va\in\Lambda$. If
$\dim\Lambda^\prime=\dim\Lambda$, this number is known to be finite
and is given by  $\det(\Lambda^\prime)/\det(\Lambda)$. The following lemma will be used extensively throughout this paper.

\begin{lemma}
\label{lemma:elementary-trafos}
Let $K\in\K_o^n$, $A\in\Z^{n,n}$ be a regular matrix and
$\vt\in\R^n$. Then,
\begin{equation*} 
\LE\big( A\,K+\vt\big) \leq 2^{n-1} |\det A| (\LE(K)+1)\leq 2^{n} |\det
A| \LE(K).
\end{equation*} 
In particular, we have
\begin{equation}
\label{eq:translations}
\LE(K+\vt)\leq 2^{n-1} (\LE(K)+1)\leq 2^n\LE(K),
\end{equation}
which is best possible, and  for $m\in\N$,
\begin{equation}
\label{eq:dilates}
\LE(m\,K)\leq 2^{n-1}\,m^n(\LE(K)+1)\leq(2m)^n\, \LE(K).
\end{equation}
which is best possible up to a factor
$(1+\frac{1}{2m-1})^n$.
\end{lemma}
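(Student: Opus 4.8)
\textbf{Proof plan for Lemma \ref{lemma:elementary-trafos}.}
The plan is to reduce everything to the single inequality \eqref{eq:translations} for pure translations, and then bootstrap from there. First I would prove \eqref{eq:translations}: given $K\in\Ko^n$ and $\vt\in\R^n$, I want to compare $\LE(K+\vt)$ with $\LE(K)$. The key observation is that $K=-K$, so $K$ and $-K=K$ coincide, which means $K+\vt$ and $K-\vt=-(K+\vt)$ both contain exactly $\LE(K+\vt)$ lattice points. Pairing these off, the standard trick is: if $\vz\in(K+\vt)\cap\Z^n$, then $-\vz\in(K-\vt)\cap\Z^n$, and the midpoint $\tfrac12(\vz+(-\vz'))$ for $\vz,\vz'\in(K+\vt)\cap\Z^n$ lies in $\tfrac12((K+\vt)+(K-\vt))=K$ by convexity and symmetry. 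So I would consider the map sending an \emph{ordered} pair $(\vz,\vz')$ of lattice points in $K+\vt$ to $\vz-\vz'\in 2K\cap(2\Z^n)$, but that loses a factor; better is to look at the set $\{\vz-\vz':\vz,\vz'\in (K+\vt)\cap\Z^n\}\subseteq (K-K)\cap\Z^n=2K\cap\Z^n$ and count parities. Actually the cleanest route: the points $\vz\in(K+\vt)\cap\Z^n$, reduced mod $2\Z^n$, fall into $2^{n}$ parity classes; within one class all differences $\vz-\vz'$ lie in $2\Z^n$, and $\tfrac12(\vz-\vz')\in K\cap\Z^n$. Choosing a representative in each nonempty class shows that if $m_j$ is the number of points in class $j$, then the $m_j-1$ differences $\tfrac12(\vz-\vz_j^{(0)})$ give distinct points of $K\cap\Z^n$, plus the origin, yielding $\LE(K+\vt)=\sum_j m_j\le 2^{n-1}(\LE(K)+1)$ after the counting is balanced across classes. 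The extremal example $K=C_n$, $\vt=\tfrac12\vone$ shows $2^{n-1}(\LE(K)+1)$ is attained.

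Second, I would derive the dilation bound \eqref{eq:dilates}. The idea is that $mK$ tiles, via coset decomposition, into translates governed by the sublattice $m\Z^n\subseteq\Z^n$ of index $m^n$: writing $\LE(mK)=\sum_{\va\in\Z^n/m\Z^n}|(mK)\cap(\va+m\Z^n)|=\sum_{\va}|(K+\va/m)\cap\Z^n|$ after scaling by $1/m$, wait---more carefully, $|(mK)\cap(\va+m\Z^n)| = |(K+\va/m)\cap \Z^n|\cdot$ hmm, $(mK)\cap(\va+m\Z^n) = m\big((K)\cap(\va/m+\Z^n)\big)$, so its cardinality is $\LE_{-\va/m+\Z^n}(K)\le 2^{n-1}(\LE(K)+1)$ by the translation bound applied to the shifted lattice (equivalently to $K-\va/m$ wait one must be careful that $K-\va/m$ need not be symmetric, but $\LE_{\vt+\Z^n}(K)=\LE(K-\vt)$ and the bound \eqref{eq:translations} applies to $K$ symmetric and arbitrary shift). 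Summing over the $m^n$ cosets gives $\LE(mK)\le m^n\cdot 2^{n-1}(\LE(K)+1)$, which is \eqref{eq:dilates}; the extremal discussion with $K=C_n$ gives $\LE(mC_n)=(2m+1)^n$ versus $2^{n-1}m^n\cdot(3^n+1)$ --- one checks the ratio is within $(1+\tfrac1{2m-1})^n$ of tight by comparing $(2m+1)^n$ to $(2m)^n$.

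Third, the general affine case $\LE(AK+\vt)$ with $A\in\Z^{n,n}$ regular. Here $A\Z^n\subseteq\Z^n$ is a sublattice of index $|\det A|$, so $\Z^n=\bigcup_{\va}(\va+A\Z^n)$, a disjoint union of $|\det A|$ cosets. Then
\begin{equation*}
\LE(AK+\vt)=\sum_{\va\in\Z^n/A\Z^n}\big|(AK+\vt)\cap(\va+A\Z^n)\big|
=\sum_{\va}\big|\big(K+A^{-1}(\vt-\va)\big)\cap\Z^n\big|,
\end{equation*}
applying $A^{-1}$ to each term; each summand is $\LE\big(K+A^{-1}(\vt-\va)\big)\le 2^{n-1}(\LE(K)+1)$ by \eqref{eq:translations}, and there are $|\det A|$ summands, giving the claim. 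I expect the main obstacle to be the first step---pinning down the combinatorial parity argument for \eqref{eq:translations} with the exact constant $2^{n-1}$ rather than $2^n$, and verifying sharpness; the dilation and affine parts are then essentially bookkeeping over coset decompositions, using only that an index-$d$ sublattice partitions $\Z^n$ into $d$ cosets and that $\LE$ is translation-covariant under the coset shifts.
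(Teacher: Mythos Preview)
Your overall plan is the paper's approach: partition $\Z^n$ into cosets of a suitable sublattice and, on each coset, use that half-differences of points in $AK+\vt$ land in $K\cap\Z^n$ by symmetry. The paper does the general case in one pass via cosets of $2A\Z^n$ (index $2^n|\det A|$); you first handle pure translations with cosets of $2\Z^n$ and then reduce the general case using cosets of $A\Z^n$. This is only an organizational difference, and your reductions in steps~2 and~3 are correct as written.

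The gap you flag in step~1 is real, and the missing ingredient is precisely what the paper supplies. Your ``representative in each class'' argument, as stated, only gives $m_j\le\LE(K)$ (since the $m_j$ half-differences $\tfrac12(\vz-\vz_j^{(0)})$ are distinct elements of $K\cap\Z^n$), hence $\sum_j m_j\le 2^n\LE(K)$; no ``balancing across classes'' will improve this, because different classes can map onto overlapping subsets of $K\cap\Z^n$. The sharp bound comes instead from looking at \emph{all} pairwise differences within one class: with $S_j=(K+\vt)\cap\Z^n\cap\Gamma_j$ one has $\tfrac12(S_j-S_j)\subseteq K\cap\Z^n$, and the elementary inequality $|X-X|\ge 2|X|-1$ for finite sets (equivalently $|X+Y|\ge|X|+|Y|-1$) gives $m_j\le\tfrac12(\LE(K)+1)$, whence $\sum_j m_j\le 2^{n-1}(\LE(K)+1)$.

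Finally, your sharpness examples are incorrect. For translations, $K=C_n$, $\vt=\tfrac12\vone$ gives $\LE(K+\vt)=2^n$ while $2^{n-1}(\LE(K)+1)=2^{n-1}(3^n+1)$, which is far from tight; the paper instead uses the thin box $Q_k=\tfrac12[-1,1]^{n-1}\times[-k+\tfrac12,k-\tfrac12]$. For dilations, $C_n$ again does not witness the stated near-sharpness; the paper takes $K=[-(1-\tfrac1{2m}),1-\tfrac1{2m}]^n$, for which $\LE(K)=1$ and $\LE(mK)=(2m-1)^n$ versus the bound $(2m)^n$.
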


\begin{proof}
Let $\Lambda=2A\Z^n\subseteq\Z^n$ and let $\Gamma_i\subseteq\Z^n$,
$1\leq i\leq 2^n|\det A|$, be the cosets of $\Lambda$ in
$\Z^n$. Consider two points $\vy_j=A\,\vx_j+\vt\in A\,K+\vt$, where
$\vx_j\in K$, $j=1,2$, that belong to a common $\Gamma_i$, say. For such
points, we have $\vy_1-\vy_2\in\Lambda$. Thus, by the symmetry of $K$,
we have
\begin{equation*} 
\frac{1}{2}(\vy_1-\vy_2)=A\Big(\frac{1}{2}(\vx_1-\vx_2)\Big)\in AK\cap A\Z^n =
A(K\cap\Z^n).
\end{equation*} 
That means
\begin{equation} 
\big|(A\,K+\vt)\cap\Gamma_i - (A\,K+\vt)\cap\Gamma_i\big|\leq \big|A(K\cap\Z^n)\big| =
\LE(K).
\label{eq:1}
\end{equation}
Since for any two finite sets $A,B\subseteq\R^n$  (cf.~\cite[Section 5.1]{ad-comb})
\begin{equation}
\label{eq:finite-sets}
|A+B|\geq |A|+|B|-1,
\end{equation}
we get from \eqref{eq:1} $\big|(A\,K+\vt)\cap\Gamma_i\big|\leq (\LE(K)+1)/2$. Since the $\Gamma_i$'s form a partition of $\Z^n$, the desired inequality follows.

In order to see that \eqref{eq:translations} is best-possible,
consider the rectangular box $Q_k=\frac{1}{2}[-1,1]^{n-1}\times
[-k+\frac{1}{2}, k-\frac{1}{2}]$, where $k\in\N$, and
$\vt=(\frac{1}{2},...,\frac{1}{2})^T\in\R^n$. Then, we have $\LE(Q_k)= 2k-1$ and $\LE(Q_k+\vt) = 2^{n-1}2k =2^{n-1}(\LE(Q_k)+1).$

For \eqref{eq:dilates}, let $K=\big[ -(1-\frac{1}{2m}),
1-\frac{1}{2m}\big]^n$; then $\LE(m\,K) = (2m-1)^n \LE(K)$.
\end{proof}

\begin{remark} \hfill
  \begin{enumerate}
\item Restricted to the class of origin symmetric lattice polytopes,
\eqref{eq:translations}
is best-possible up to a factor 4, as the polytopes of Example
\ref{ex:berg} together with the vector $\vt=(1/2,...,1/2,0)^T$
show: On the one hand, one has $\LE(K_h)=2h+O(1)$, where
$O(\cdot)$ describes the asymptotic behaviour for
$h\rightarrow\infty$.
On the other hand, the cube
$[0,1]^{n-1}$ is contained in the relative interior of $K\cap
\ve_n^\perp + \vt$. Even more, each of its vertices $\vv$ is the
midpoint of $[\vt,\vt+\vx]$, where $\vx$ is a vertex of
$K\cap\ve_n^\perp$. Hence, the lines $\vv+\R\ve_n$ contribute $h+O(1)$
points to $(K_h+\vt)\cap\Z^n$ each. Since there are $2^{n-1}$ such lines, we obtain
\begin{equation*}
\lim_{h\rightarrow\infty} \frac{\LE(K_h+\vt)}{\LE K_h} = \lim_{h\rightarrow\infty}\frac{2^{n-1} (h+O(1))}{2h+O(1)} = 2^{n-2}.
\end{equation*}

In fact, Wills \cite{wills}  showed that for any lattice polygon
$P\subseteq\R^2$ and $\vt\in\R^2$ one has $\LE(P+\vt)\leq \LE(P)$. We
conjecture that for any lattice polytope $P\in\Ko^n$ 
\begin{equation*} 
  \LE(P+\vt) \leq 2^{n-2} \LE(P).
\end{equation*} 
\item
If $K$ is not necessarily symmetric,
\eqref{eq:translations} and \eqref{eq:dilates} fail. In that case,
counterexamples are given by the simplices $T_k$ in the proof of
Proposition \ref{prop:simplex}. Basically, the reason for this is that
$T_k$ contains $O(k)$ lattice points, while in a translation or
dilation of $T_k$ one may find a rectangular triangle spanned by two
orthogonal segments of length $O(k)$ each, lying in a hyperplane of
the form $\ve_i^\perp+\vt$, $\vt\in\Z^3$. Such a triangle contributes
$O(k^2)$ points. Again, letting $k\rightarrow\infty$ shows that the
inequalities cannot be generalized to the non-symmetric case. 
This has also been observed
independently by Lovett and Regev in \cite{lovett-regev}.
\end{enumerate}
\end{remark}

Next we come to the proof of Theorem \ref{thm:dmeyer}. 
\begin{proof}[Proof of Theorem \ref{thm:dmeyer}]
  Let $K_i=K\cap \ve_i^\perp$ and
  $\Lambda_i=\Z^n\cap\ve_i^\perp$, $1\leq i\leq n$,. First, assume that there is an $i$ such that $\frac{1}{2}K_i$ contains only the origin as a lattice point. For convenience, let $i=n$. Then Lemma \ref{lemma:elementary-trafos}, \eqref{eq:dilates}, yields $\LE_{\Lambda_n} K_n\leq 4^{n-1}\LE_{\Lambda_n}((1/2)K_n)=4^{n-1}$. Thus, since $K_i\subseteq K$, 
  \begin{equation*}
  \LE(K)^{n-1}\geq \prod_{i=1}^{n-1} \LE_{\Lambda_i}K_i \geq \frac{1}{4^{n-1}}\prod_{i=1}^n \LE_{\Lambda_i} K_i > \frac{1}{4^{n(n-1)}}\prod_{i=1}^n \LE_{\Lambda_i} K_i,
  \end{equation*}
  and we are done. So we can assume that every $\frac{1}{2}K_i$ contains a non-zero lattice point. In this case, the second inequality in Lemma \ref{lemma:elementary-trafos}, \eqref{eq:dilates}, is strict and we obtain
%
  \begin{equation}
	\LE K_i = \LE_{\Lambda_i}K_i=\LE_{\Lambda_i}\Big(2\frac{1}{2}K_i\Big)< 4^{n-1}\LE_{\Lambda_i}\Big(\frac{1}{2}K_i\Big) = 4^{n-1}\LE_{2\Lambda_i}(K_i).
        \label{eq:ineq2}
  \end{equation}   
  Now we consider the linear map
  \begin{equation*} 
    \begin{split} 
     \phi &: (K_1\cap 2\Lambda_1)\times \cdots\times (K_n\cap
     2\Lambda_n)\to (K\cap\Z^n)^{n-1} \text{ given by } \\ 
     &\phi\big( (\va_1,\dots,\va_n) \big) = \big(\frac{1}{2}(\va_2-\va_1), \frac{1}{2}(\va_3-\va_1),\dots, \frac{1}{2}(\va_n-\va_1)\big).
     \end{split}
   \end{equation*}
  We note that by the symmetry and convexity of $K$ as well as the definition of
  $2\,\Lambda_i$ we readily have $\phi\big(
  (\va_1,\dots,\va_n)\big)\in (K\cap\Z^n)^{n-1}$. Moreover, 
  since $\va_i\in\ve_i^\perp$, $1\leq i\leq n$,  the map is injective
  and so
  \begin{equation*}
    \LE K^{n-1}\geq\prod_{i=1}^n\LE_{2\Lambda_i}K_i.
  \end{equation*}
  Together with \eqref{eq:ineq2} we obtain
  \begin{equation*}
    \LE(K)^{\frac{n-1}{n}} > \frac{1}{4^{n-1}}\big(\prod_{i=1}^n\LE(K_i)\big)^{\frac{1}{n}}.
  \end{equation*}

\end{proof}

%
%

Next, we want to reverse the inequality in Theorem \ref{thm:dmeyer}.
Apparently, one cannot get  an upper bound on $\LE(K)$ in terms of the
geometric mean of the sections $K\cap\ve_i^\perp$. Here we have to
replace the $\ve_i$ by a lattice basis $\vb_1,...,\vb_n$ of $\Z^n$  that "suits" the body $K$. Our strategy will then be to decompose $\Z^n=\cup_{j\in\Z}\{\vx\in\Z^n:\langle \vx,\vb_i\rangle = j\}$ and estimate the sections parallel to $\vb_i^\perp$ against the central one.  As for the volume \emph{Brunn's concavity principle} states that
\begin{equation}
\label{eq:brunn}
\vol\big( K\cap (\vt+L) \big) \leq \vol(K\cap L),
\end{equation}
for any $k$-dimensional linear subspace $L\subseteq\R^n$, $\vt\in\R^n$ and $K\in\Ko^n$. So the volume-maximal section of $K$ parallel
to $L$ is indeed always the one containing the origin. Unfortunately, this is
false in the discrete setting as the following example shows:
Let $K =\conv\big(\pm([0,1]^{n-1}\times\{1\})\big)$ and for $1\leq
k\leq n-1$, let  $L_k=\mathrm{span}\{\ve_1,...,\ve_k\}$. Then
$\#(K\cap L_k) = 1$, but $\#(K\cap(\ve_n+L_k)) = 2^k$.

Indeed, the deviation between the central section and the maximal
section is extremal for $K$ as above, as claimed in Lemma
\ref{lem:brunn} which we prove next.
\begin{proof}[Proof of Lemma \ref{lem:brunn}]
We may assume  $\vt\in\Z^n$. In that case, $\Lambda = L\cap\Z^n$ and
$\Lambda^\prime=\Lambda+\vt$ are $k$-dimensional (affine)
sublattices 
of $\Z^n$ and both of them intersect exactly $2^k$ cosets of
$\Z^n/2\Z^n$: Let $\Gamma\in \Z^n/2\Z^n$ such that
$\Lambda\cap \Gamma\ne\emptyset$. Then
$\Lambda\cap\Gamma\in\Lambda/2\Lambda$ and  since two different cosest
do not have a common point, we see that $\Lambda$ intersects at most
$|\Lambda/2\Lambda|=2^k$ cosets of $\Z^n/2\Z^n$. Conversely, since
$\Lambda$ arises as a section of $\Z^n$ with a linear subspace, there exists 
a complementary lattice $\overline{\Lambda}\subseteq\Z^n$ such that
$\Lambda\oplus\overline{\Lambda}=\Z^n$. Then, every coset
$\tilde \Gamma\in\Lambda/2\Lambda$ defines a unique coset
$\tilde\Gamma\oplus 2\overline{\Lambda}$ of $\Z^n/2\Z^n$  and so
$\Lambda$ meets at least $2^k$ cosets of $\Z^n/2\Z^n$. Regarding the
affine lattice $\Lambda^\prime$, we note that the translation by $\vt$
serves as a bijection between cosets in $\Lambda$ and the cosets in $\Lambda^\prime$.  

Now consider two points $\vx,\vy\in K\cap \Lambda^\prime$ belonging to
a common coset $\Gamma$ of $ \Z^n/2\Z^n$. By the symmetry of $K$, we
have $\frac{1}{2}(\vx-\vy)\in K\cap \Lambda$ and so
\begin{equation*} 
  \big|K\cap\Lambda'\cap\Gamma-K\cap\Lambda'\cap\Gamma\big| \leq |K\cap\Lambda|.
\end{equation*}   
Thus, by \eqref{eq:finite-sets}, every coset of $\Z^n/2\Z^n$ that is
present in $\Lambda^\prime\cap K$ contains at most
$\frac{1}{2}(\#(K\cap \Lambda) +1)$ points of $K\cap \Lambda'$. Hence,
\begin{equation*} 
   |K\cap\Lambda'|\leq 2^k\, \frac{1}{2}( |K\cap \Lambda| +1).
\end{equation*}   
\end{proof}

Now we are ready for the proof of the reverse Meyer Theorem \ref{thm:drev_meyer}. 
\begin{proof}[Proof of Theorem \ref{thm:drev_meyer}] 
By induction on the dimension, we will show that for any $n$-dimensional convex body $K\in\Ko^n$ and any $n$-dimensional lattice $\Lambda$, there exists a basis $\vb_1,...,\vb_n$ of $\Lambda^\star$ and vectors $\vt_1,...,\vt_n\in\Lambda$ such that 
\begin{equation}
\label{eq:induction-hyp}
\LE_\Lambda(K)^{n-1} \leq (n!)^2 4^n \prod_{i=1}^n \LE_\Lambda\big(K\cap(\vt_i+\vb_i^\perp)\big).
\end{equation} 
From this, \eqref{eq:rev-meyer-inhom} follows by considering
$\Lambda=\Z^n$ and taking the $n$th root. Moreover, \eqref{eq:rev-meyer-hom} follows immediately from \eqref{eq:rev-meyer-inhom} and Lemma \ref{lem:brunn}.
  
  First, we assume $\dim(K\cap\Lambda)=n$.
  For any $\vb\in\Lambda^\star$ we may write
  \begin{equation}
    \begin{split} 
    \LE_\Lambda K &= \sum_{i=-\lfloor\suk(K,\vb)\rfloor}^{\lfloor\suk(K,\vb)\rfloor}
    \LE_\Lambda\big(K \cap \{\vx\in\R^n :\langle \vb,\vx\rangle = i\}\big) \\ 
        &\leq (2\lfloor \suk(K,\vb)\rfloor+1) \LE_\Lambda\big(K\cap(\vt_\vb+\vb^\perp)),
      \end{split}
  \label{eq:pigeon-slicing}    
  \end{equation}
  where $\vt_\vb\in\Lambda$ is chosen to be the translation that maximizes
  the number of lattice points in a section parallel to
  $\vb^\perp$.

  Now let $\vb_1,\dots,\vb_n\in\Lambda^\star$ be a basis of $\Lambda^\star$ obtained
  from \eqref{eq:thm:basis} with respect to the polar body $K^\star$,
  i.e., we have $|\vb_i|_{K^\star}\leq i \lambda_i(K^\star,\Lambda^\star)$, $1\leq
  i\leq n$.  For the vectors $\vb_i$ we denote the above translation vectors $\vt_{\vb_i}$ by
  $\vt_i$. Then, on account of $\suk(K,\vb_i)=|\vb_i|_{K^\star}$ we
  conclude from \eqref{eq:pigeon-slicing}
  \begin{equation} 
    \begin{split} 
    \LE_\Lambda K^n & \leq n!\prod_{i=1}^n \big(2\lambda_i(K^\star,\Lambda^\star)+1\big)
    \prod_{i=1}^n \LE_\Lambda\big(K\cap(\vt_i+\vb_i^\perp)\big)\\ 
             & \leq n! 3^n \prod_{i=1}^n \lambda_i(K^\star,\Lambda^\star)   \prod_{i=1}^n \LE_\Lambda\big(K\cap(\vt_i+\vb_i^\perp)\big),
    \label{eq:later}
  \end{split}  
  \end{equation}   
  where for the last inequality we used $\lambda_i(K^\star,\Lambda^\star)\geq 1$
  which follows from the assumption $\dim(K\cap\Lambda)=n$ via \eqref{eq:5}.

  Using the upper bound of Minkowski's theorem
 \eqref{eq:minkowski}, the lower bound on the volume product \eqref{eq:mahler}
  and van der Corput's inequality \eqref{eq:vdcorput}, we estimate
 
 \begin{equation}
 \label{eq:lambdastars}
 \prod_{i=1}^n \lambda_i(K^\star,\Lambda^\star) \leq \frac{2^n\det\,\Lambda^\star}{\vol(K^\star)}\leq n!\Big(\frac{2}{3}\Big)^n \vol(K)\det\Lambda^\star\leq n!\Big(\frac{4}{3}\Big)^n \LE_\Lambda(K).  
 \end{equation}
 
 Substituting this into \eqref{eq:later} 
 yields the desired inequality \eqref{eq:induction-hyp} for this case.
 
 It remains to consider the case $\dim(K\cap\Lambda)<n$, so let
 $K\cap\Lambda\subseteq H$  for some $(n-1)$-dimensional lattice subspace $H\subseteq\R^n$.
 Let $\Gamma= \Lambda\cap H$ and next 
 we apply our  induction hypothesis to $\Gamma$ and $K\cap H$. Hence, we find a basis $\vy_1,...,
 \vy_{n-1}$ of $\Gamma^\star$ and vectors $\vt_1,...,\vt_{n-1}\in\Gamma$        such that
%
%
%
 \begin{equation}
 \label{eq:rev-meyer-ind}
 \LE_\Gamma(K)^{n-2}\leq (n-1)!^2 4^{n-1} \prod_{i=1}^{n-1} \LE_\Gamma \big(K\cap(\vy_i^\perp+\vt_i)\big),
 \end{equation}
 which is equivalent to
 \begin{equation}
 \label{eq:weird-vectors}
 \LE_\Gamma(K)^{n-1}\leq (n-1)!^2 4^{n-1} \LE_\Gamma(K\cap \vb_n^\perp) \prod_{i=1}^{n-1} \LE_\Gamma\big(K\cap(\vy_i^\perp+\vt_i)\big),
 \end{equation}
 where $\vb_n\in\Lambda^\star$ is a primitive normal vector of $H$. Unfortunately,
 the independent system
 $\{\vy_1,...,\vy_{n-1},\vb_n\}$ is in general not a basis of
 $\Lambda^\star$. In fact, the $\vy_i$'s are not elements of $\Lambda^\star$ in the first   place. 
 
 In view of \eqref{eq:polarity1}, we have $\Gamma^\star = \Lambda^\star|H=\Lambda^\star|\vb_n^\perp$. 
 So there are vectors $\vb_i\in (\vy_i+\R\vb_n)\cap\Lambda^\star$. 
 For these vectors, one has 
 $\vb_i^\perp\cap H = \vy_i^\perp\cap H$. By our assumption on $K$,
 this means
 \begin{equation}
 \label{eq:gammalambda}
 \LE_\Gamma\big(K\cap(\vy_i^\perp+\vt_i)\big) = \LE_\Lambda\big(K\cap(\vb_i^\perp+\vt_i)\big),
 \end{equation}
 for all $1\leq i\leq n-1$. Moreover, $\{\vb_1,...,\vb_n\}$ is a $\Lambda^\star$-basis, since 
 \begin{equation}
 \label{eq:determinants}
 \begin{aligned}
 \det(\vb_1,...,\vb_n) &= \det(\vy_1,...,\vy_{n-1}, \vb_n)\\
 &= |\vb_n|\det\Gamma^\star = \frac{|\vb_n|}{\det\Gamma}=\frac{|\vb_n|}{|\vb_n|\det\Lambda}=\det\Lambda^\star.
 \end{aligned}
 \end{equation}  
 In view of \eqref{eq:weird-vectors} and \eqref{eq:gammalambda}, $\{\vb_1,...,\vb_n\}$ is the desired basis and our proof is complete.

\end{proof}

\begin{remark} We remark that if we just want to find linearly independent lattice
points $\va_i\in\Z^n$, $1\leq i\leq n$, for the slices in Theorem
 \ref{thm:drev_meyer} instead of a basis, then one can save one factor of $n$ in the bounds of Theorem \ref{thm:drev_meyer}. For if, we replace in the proof above the
basis vectors $\vb_i$ by linearly independent lattice points
$\va_i\in\lambda_i(K^\star)K^\star\cap\Z^n$, $1\leq i\leq n$.   In this way
we do not need the estimate \eqref{eq:thm:basis}.
 In particular this leads to
  \begin{equation}
\label{eq:dslicing-sym}
\LE(K)^{\frac{n-1}{n}} < O(n) \max_{\vt\in\Z^n,\vu\in\Z^n\setminus\{\vnull\}} \LE(K\cap (\vt+\vu^\perp)).
\end{equation}
\end{remark} 

In the remainder of this section, we want to generalize \eqref{eq:dslicing-sym} to the non-symmetric case.

If we consider \eqref{eq:pigeon-slicing}, we see that we can also
estimate $\# K$, if $K$ is not symmetric; in that case one replaces
the factor $(2\lfloor \suk(K,\vb)\rfloor+1)$ on the right hand side by the number
of hyperplanes parallel to $\vb^\perp$ that intersect $K$. As it will turn out, the challenge
then is to
compare the number of lattice points in $K-K$ to the number of points
in $K$.

As for the volume, this is known as the \emph{Rogers--Shephard} inequality, which asserts that
\begin{equation}
\label{eq:rogers-shephard}
\vol(K-K) \leq {{2n}\choose{n}} \vol(K).
\end{equation}
The simplices $T_k$ given in the proof of Proposition \ref{prop:simplex} show
that there is no similar inequality for the lattice point enumerator:
While we have $\LE(T_k) = O(k)$, in $T_k-T_k$ we find the triangle $\conv\{0,k/2\ve_2, k\ve_3\}$ which contains $O(k^2)$ lattice points. Since $k$ can be arbitrarily large, there is no constant $c_n>0$ depending only on the dimension such that $\LE(K-K)\leq c_n\LE(K)$.

However, the simplices $T_k$ are extremely flat and therefore easily admit a large hyperplane section. Our strategy in order to prove Theorem \ref{thm:dslicing} will be to argue that convex bodies $K$ whose difference body $K-K$ contains disproportionately many lattice points are automatically flat. This reasoning is inspired by the proof of Theorem 4 in \cite{dhelly}. 

%
%
%

\begin{proof}[Proof of Theorem \ref{thm:dslicing}]
The inequality for $K\in\Ko^n$ is already given by \eqref{eq:dslicing-sym}. So let $K\in\K^n$. We may assume $\dim(K\cap\Z^n) = n$, because otherwise, $K\cap\Z^n$ is contained in a hyperplane itself and the inequality follows directly.
Therefore, $K-K$ contains $n$ linearly independent lattice points and it
follows
$\lambda^\star=\lambda_1\big((K-K)^\star\big)\geq 1$ (cf.~\eqref{eq:5}).  
Let $\vy\in
\lambda^\star(K-K)^\star\cap\Z^n\setminus\{\vnull\}$. Then
\begin{equation*}
    \suk(K,\vy)+\suk(K,-\vy)=\suk(K-K,\vy)=|\vy|_{(K-K)^\star}=\lambda^\star,
\end{equation*}
and similarly to
\eqref{eq:pigeon-slicing}, \eqref{eq:later},  we obtain for a certain $\vt\in\Z^n$
\begin{equation}\label{eq:non-sym-slicing}
\LE K \leq (2\lambda^\star +1) \LE\big(K\cap (\vt+\vy^\perp)\big)\leq 3\lambda^\star \LE\big(K\cap (\vt+\vy^\perp)\big). 
\end{equation}
Let $\vc\in K$ be the centroid of $K$. Then it is known
that \cite[Lemma 2.3.3]{schneider}
\begin{equation}
  \label{eq:3}
                   K-K\subset (n+1)(-\vc+K).
\end{equation}
First we assume that $\frac{1}{2}(K+\vc)=\vc+\frac{1}{2}(-\vc+K)$ does
not contain any integral lattice point. Then the well-known flatness theorem
implies that (cf.~\cite{BanaszczykLitvakPajorEtAl1999})
\begin{equation*} 
\lambda^* =2\lambda_1\left(\big(\frac{1}{2}(\vc+K)-\frac{1}{2}(\vc+K)\big)^\star\right)=O(n^{3/2}).
\end{equation*} 
In view of \eqref{eq:non-sym-slicing} we are done.

So we can assume that there is a lattice point
$\va\in\frac{1}{2}(\vc+K)\cap\Z^n$. By the choice of $\vc$
(cf.~\eqref{eq:3}), we get
\begin{equation}\label{eq:sym-non-sym}
\begin{aligned}
\va+\frac{1}{2(n+1)}(K-K)&\subseteq \frac{1}{2}(\vc+K) + \frac{1}{2(n+1)}(K-K)\\
&=\frac{1}{2}K + \frac{1}{2}\big((\vc+\frac{1}{(n+1)}(K-K)\big)\\
&\subseteq \frac{1}{2}K+\frac{1}{2}K = K.
\end{aligned}
\end{equation}
Now in order to bound $\lambda^*$ in this case we use
\eqref{eq:lambdastars} applied to $(K-K)^\star$ and $\Z^n$, which implies
\begin{equation*} 
 (\lambda^*)^n \leq n!\left(\frac{4}{3}\right)^n\LE(K-K).
\end{equation*}   
Together with  \eqref{eq:non-sym-slicing} we obtain
\begin{equation}
\LE(K)^n \leq n!4^n\LE(K-K)\LE\big(K\cap(\vt+\vy^\perp)\big)^n.
\label{eq:estimate_against_K-K} 
\end{equation}

In order to estimate the number of lattice points in  $K-K$ we may apply Lemma
\ref{lemma:elementary-trafos}, \eqref{eq:dilates} and with
\eqref{eq:sym-non-sym} we get
\begin{equation*}
\LE(K-K)\leq 4^n(n+1)^n \LE\Big(\frac{1}{2(n+1)}(K-K)\Big)\leq
4^n(n+1)^n\LE K.
\end{equation*}
By plugging this into \eqref{eq:estimate_against_K-K}, we obtain
\begin{equation*}
\LE(K)^{n-1} \leq 16^nn!(n+1)^n \LE\big(K\cap(\vt+\vy^\perp)\big)^n.
\end{equation*}
After taking the $n$-th root, we have
$$\LE(K)^{\frac{n-1}{n}}\leq O(n^2)\LE\big(K\cap(\vt+\vy^\perp)\big)^n,$$
as desired.
\end{proof}
We finish the section by proving Theorem \ref{thm:lower-slicing-bound}
providing a lower bound on the constants presented in 
Theorem \ref{thm:drev_meyer} and \ref{thm:dslicing}.
%
%
\begin{proof}[Proof of Theorem \ref{thm:lower-slicing-bound}]
We consider a lattice $\Lambda\subseteq\R^n$ such that $\Lambda$ is
\emph{self-polar}, i.e., $\Lambda = \Lambda^\star$, and $\lambda_1(B_n,\Lambda) = c\sqrt{n}$, where $c$ is an
absolute constant.  Such lattices have been detected by Conway and Thompson
\cite[Theorem 9.5]{bilinear}. 
We will use a volume approximation argument for the Euclidean ball $rB_n=\{\vx\in\R^n:|\vx|\leq r\}$, where $r\rightarrow\infty$.

For $\vx\in\R^n\setminus\{\vnull\}$ and $\alpha\in\R$, let
$H(\vx,\alpha) = \{\vy\in\R^n: \langle \vx,\vy\rangle = \alpha\}$ be
the corresponding hyperplane. For $r>0$ let $\va_r\in\R^n$ and
$\alpha_r\in\R$ such that $\#_\Lambda (rB_n \cap H(\va_r,\alpha_r))$
is maximal. Since $\Lambda$ is self--polar, we may assume that $\va_r\in\Lambda$ and $\alpha\in\Z$. In order to control the limit $r\rightarrow\infty$ we want
to find a sequence of radii $(r_j)_{j\in\N}\subseteq\N$ such that
$r_j\rightarrow\infty$ and $H(\va_{r_j},\alpha_{r_j})$ is constant. To
this end, fix a primitive vector $\va_0\in\Lambda$. Van der Corput's inequality \eqref{eq:vdcorput} yields

\begin{equation}
\label{eq:vdc}
\LE_\Lambda\big(rB_n\cap H(\va_r,\alpha_r)\big) \geq \LE_\Lambda (rB_n\cap \va_0^\perp) \geq 2^{-(n-1)} r^{n-1} \frac{\omega_{n-1}}{|\va_0|}, 
\end{equation}
where $\omega_i$ denotes the volume of the $i$--dimensional Euclidean unit
ball and we used that the determinant of $\Lambda\cap \va_0^\perp$ is
given by $|\va_0|\det\Lambda=|\va_0|$, since the determinant of any self-polar lattice is 1.

On the other hand, if $r$ is large enough, $rB_n$ contains $n$
linearly independent points of $\Lambda$. Thus, the maximal section
$rB_n\cap H(\va_r,\alpha_r)$ contains $(n-1)$ affinely independent
points of $\Lambda$; Otherwise, we might choose another point $\vx\in rB_n\cap\Lambda$
 and replace $H(\va_r,\alpha_r)$ by the affine hull of $rB_n\cap H(\va_r,
 \alpha_r)\cap\Lambda$ and $\vx$. This yields a hyperplane that contains more lattice points of $rB_n$ than $H(\va_r,\alpha_r)$, contradicting the maximality.   Hence, Blichfeldt's inequality
\eqref{eq:blichfeldt} yields
\begin{equation*}
\LE_\Lambda\big(rB_n\cap H(\va_r,\alpha_r) \leq n!\, r^{n-1}
\frac{\omega_{n-1}}{|\va_r|}.
\end{equation*} 
Combining with \eqref{eq:vdc}, we obtain $|\va_r|\leq 2^{n-1}\,n!\,|\va_0|$, for almost all $r\in\N$. Since this bound is independent of $r$, we find a sequence $(r_j)_{j\in\N}\subseteq\N$ that tends to infinity such that $\va_{r_j} = \overline{\va}$, for all $j$ and some primitive $\overline{\va}\in\Lambda$ independent of $j$. 

Since $\overline{\va}\in\Lambda$, we have for any $\alpha>|\overline{\va}|^2$
\begin{equation*} 
-\overline{\va}+\left(r_jB_n\cap H(\overline{\va},
  \alpha)\big)\cap\Lambda\right)  \subseteq (r_jB_n\cap H(\overline{\va}, \alpha-|\overline{\va}|^2)\cap\Lambda).
\end{equation*}
Hence, we may assume that $\alpha_{r_j}\leq |\overline{\va}|^2$. 
Since $\alpha_r$ is integral, we even find a sequence of radii
$(r_j)_{j\in\N}\subseteq\N$ such that
$H(\va_{r_j},\alpha_{r_j})=H(\overline{\va},
\overline{\alpha})=:\overline{H}$ for all $j$ and a fixed 
$\overline{\alpha}\in\N$
.

We choose $K_j=r_jB_n$. In order to estimate the limit, we want to apply \eqref{eq:vol-appr} to $r_jB_n$ and $r_jB_n\cap\overline{H}$. The latter body
 may be viewed as a ball of radius $r_j - o(r_j)$ that is embedded in an
$(n-1)$-space together with a translation of
$\Lambda\cap\overline{\va}^\perp$. Thus, by \eqref{eq:vol-appr}, 
$$\begin{aligned}
& \lim_{j\rightarrow\infty} \frac{\#_\Lambda(r_jB_n)^{n-1}}{\max \#_\Lambda(r_jB_n\cap H)^n}\\
& = \lim_{j\rightarrow\infty} \frac{\big(\#_\Lambda (r_jB_n)/r_j^n\big)^{n-1}}{\big(\#_\Lambda(r_jB_n\cap\overline{H})/(r_j-o(r_j))^{n-1}\big)^n}\\
&= |\overline{\va}|^n \frac{\omega_n^{n-1}}{\omega_{n-1}^n} \geq (c\sqrt{n})^n e^{-c^\prime n},
\end{aligned}$$ 
where $c^\prime>0$ is an absolute constant. In the last step we used the assumption that
$\lambda_1(B_n,\Lambda)=c\sqrt{n}$ and Stirling's formula to estimate
the volumes, which are known to be $\pi^\frac{n}{2}/\Gamma(\frac{n}{2}+1)$, where $\Gamma(\cdot)$ is the Eulerian Gamma-function. Taking the $n$-th root yields the claim.
\end{proof}



\section{Discrete version of the reverse Loomis--Whitney inequality}
\label{sec:proj}

The goal of this section is to prove Theorem \ref{thm:drev_lw}. First, 
for a given $\vv\in\Z^n$  we have to estimate
the number of points  in the projection of $K|\vv^\perp$ with respect
to the projected lattice $\Z^n|\vv^\perp$ 
against
the number of  points in $(K\cap\Z^n)|\vv^\perp$.


\begin{lemma}
\label{lemma:preimages}
Let $K\in \Ko^n$ and $\vv\in (K\cap\Z^n)\setminus\{\vnull\}.$ Then
\begin{equation*} 
\LE_{\Z^n|\vv^\perp} (K|\vv^\perp)  \leq O(1)^n
\cdot|(K\cap\Z^n)|\vv^\perp|
\end{equation*} 
\end{lemma}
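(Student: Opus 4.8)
The plan is to control, for each point of the projected lattice $\Lambda:=\Z^n|\vv^\perp$, the length of the fibre of $K$ in the direction $\vv$ and to compare it with the spacing of the integer preimages on that fibre. First I would reduce to the case that $\vv$ is \emph{primitive}: if $\vv=k\vw$ with $\vw\in\Z^n$ primitive then $\vw=\tfrac1k\vv\in K$ (using $\vnull\in K=-K$), $\vw^\perp=\vv^\perp$, and neither $K|\vv^\perp$ nor $\Z^n|\vv^\perp$ changes, so we may replace $\vv$ by $\vw$. For primitive $\vv$, the line through any $\vx_0\in\Z^n$ parallel to $\vv$ meets $\Z^n$ exactly in $\vx_0+\Z\vv$, i.e.\ in points spaced $|\vv|$ apart. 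Now set $P:=K|\vv^\perp$, a symmetric convex body in $\vv^\perp$, and for $\vy\in P$ write $K\cap(\vy+\R\vv)=\{\vy+t\vv : a(\vy)\le t\le b(\vy)\}$; since $b$ is concave and $a$ is convex on $P$, the fibre length $g(\vy):=(b(\vy)-a(\vy))\,|\vv|$ is concave on $P$, and because $[-\vv,\vv]\subseteq K$ we have $g(\vnull)\ge 2|\vv|$.

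The key point is that $g(\vy)\ge|\vv|$ for every $\vy\in\tfrac12 P$: indeed $2\vy,\vnull\in P$ and $\vy=\tfrac12\vnull+\tfrac12(2\vy)$, so concavity of $g$ together with $g\ge0$ gives $g(\vy)\ge\tfrac12 g(\vnull)+\tfrac12 g(2\vy)\ge\tfrac12\cdot2|\vv|=|\vv|$. Hence $\tfrac12 P\cap\Lambda\subseteq(K\cap\Z^n)|\vv^\perp$: given $\vy\in\tfrac12 P\cap\Lambda$, choose a preimage $\vx_0\in\Z^n$; then $K\cap(\vx_0+\R\vv)$ is a segment of length $g(\vy)\ge|\vv|$, so it contains one of the $|\vv|$-spaced points $\vx_0+m\vv$, $m\in\Z$, and that point lies in $K\cap\Z^n$ and projects to $\vy$. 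This yields $\LE_\Lambda(\tfrac12 P)\le|(K\cap\Z^n)|\vv^\perp|$. Finally I would apply the dilation estimate of Lemma \ref{lemma:elementary-trafos}, \eqref{eq:dilates} (in its form for an arbitrary lattice), in the $(n-1)$-dimensional space $\vv^\perp$ to the body $\tfrac12 P$, getting $\LE_\Lambda(P)=\LE_\Lambda(2\cdot\tfrac12 P)\le 4^{n-1}\LE_\Lambda(\tfrac12 P)$. Chaining the two inequalities gives $\LE_{\Z^n|\vv^\perp}(K|\vv^\perp)\le 4^{n-1}|(K\cap\Z^n)|\vv^\perp|$, i.e.\ the assertion with $O(1)^n=4^{n-1}$.

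The heart of the argument, and the only non-routine step, is the concavity inequality $g\ge|\vv|$ on $\tfrac12 P$: this is exactly what turns ``$\vy\in\Lambda$ lies under the shadow of $K$'' into ``some lattice point of $K$ projects to $\vy$'', and it uses the symmetry of $K$ essentially (to deduce $g(\vnull)\ge2|\vv|$ from $\vv\in K$), which explains why such a projection bound cannot hold for general convex bodies. The other ingredients — the reduction to primitive $\vv$, the concavity of $g$, and the dilation bound — are routine given what is already available. One should also briefly check the degenerate case where $K$, and hence $P$, fails to be full-dimensional, but there the dilation bound only improves, so nothing changes.
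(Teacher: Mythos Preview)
Your proof is correct. The key geometric step---that the fibre length $g(\vy)$ is at least $|\vv|$ on $\tfrac12 P$, so every point of $\tfrac12 P\cap\Lambda$ has a lattice preimage in $K$---is exactly the mechanism the paper uses too (there phrased as a triangle argument on each line $\R\vy$ through the origin; cf.~Figure~\ref{fig:preimages}). The difference is in the packaging: the paper works line by line, introduces the auxiliary set $A=\{\vx\in\overline K\cap\Lambda\setminus\{\vnull\}:|\R\vx\cap\overline K\cap\Lambda|\ge5\}$, observes that $\overline K\cap2\Lambda\setminus\{\vnull\}\subseteq A$ and $|A|\le 3\,|(K\cap\Z^n)|\vv^\perp|$, and then runs a coset argument modulo $4\Lambda$ directly. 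You instead invoke the concavity of $g$ globally to get $\tfrac12 P\cap\Lambda\subseteq(K\cap\Z^n)|\vv^\perp$ in one stroke, and then quote the dilation bound~\eqref{eq:dilates} (whose proof is the same coset argument, and which carries over to any lattice verbatim) to pass from $\tfrac12 P$ to $P$. Your route is shorter and gives the explicit constant $4^{n-1}$, improving the paper's $6\cdot4^{n-1}$; the paper's route avoids citing \eqref{eq:dilates} in its general-lattice form by redoing the coset step in situ. The reduction to primitive $\vv$ is a nice touch, though as you implicitly note it is not strictly necessary: for non-primitive $\vv$ the fibre spacing only shrinks, which helps.
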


\begin{proof}
For short we write $\overline{K} = K|\vv^\perp$ and $\Lambda =
\Z^n|\vv^\perp$. Consider a line $\ell\subseteq\vv^\perp$ that contains at least 5 points from $\overline{K}\cap\Lambda$, i.e.,
\begin{equation*}
  \ell\cap\overline K\cap\Lambda=\{i\cdot \vy: -m\leq i\leq m\}
\end{equation*}
for some $\vy\in\Lambda\setminus\{\vnull\}$ and $m\geq 2$.
Since $\vv\in K$, the length of
$\R\vv\cap K$ is at least $2|\vv|$. Therefore, the length of the segment  $(i\cdot
\vy+\R\vv)\cap K$ is at least $|\vv|$, for any $|i|\leq\lfloor
\frac{m}{2}\rfloor$, as can be seen by considering the triangle
given by $\vv$, $-\vv$ and a point $\vw\in K$ with $\vw|\vv^\perp=m \vy$ (cf.\ Fig.\ \ref{fig:preimages}).
\begin{figure}[hbt]
\centering
\includegraphics[width = .55\textwidth]{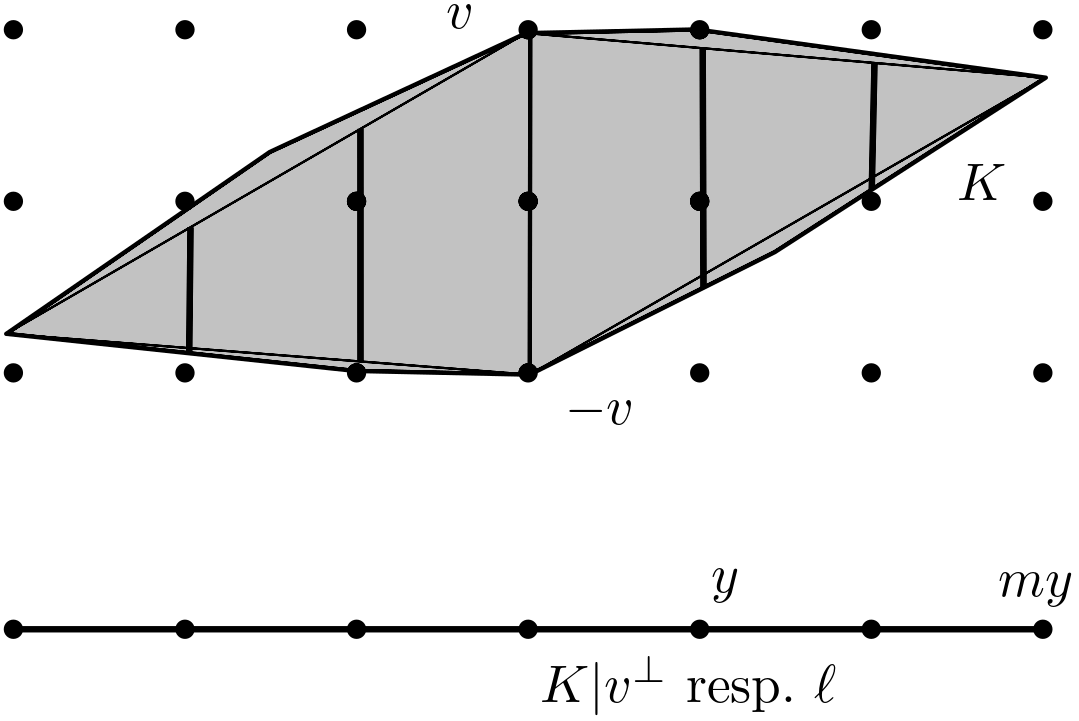}
\caption{The lattice points in the projection close to the origin admit a preimage in $K$ that is long enough to ensure a lattice point.}
\label{fig:preimages}
\end{figure}
So, as long as $i\leq m/2$ the section $(i\cdot\vy+\R\vv)$
contains a lattice point. Since $m\geq 2$,  at least
$1/3$ of the points  in $\ell\cap\Lambda\cap \overline{K}\setminus\{0\}$
have a preimage in $K\cap\Z^n$.
Hence, with 
\begin{equation*} 
  A = \big\{\vx\in \overline{K}\cap\Lambda\setminus\{\vnull\}:|\R \vx\cap
  \overline{K}\cap\Lambda|\geq 5\},
\end{equation*} 
we know that $|A|\leq 3\big|(K\cap\Z^n)|\vv^\perp\big|$ and so 
it suffices to prove
\begin{equation} 
  \LE_\Lambda(\overline{K}) \leq O(1)^n\,|A|.
  \label{eq:itsuffices}
\end{equation} 
  To this end, let $R_1,...,R_{4^{n-1}}\subseteq\Lambda$ be the cosets
  of $4\Lambda$ in $\Lambda$ and consider two distinct points
  $\vx,\vy\in\overline{K}\cap R_k$.
  Then  $\vx-\vy\in 4\Lambda$ and thus
\begin{equation*} 
  \frac{1}{2}(\vx-\vy)\in \overline{K}\cap 2\Lambda\setminus\{\vnull\}\subseteq
  A.
\end{equation*} 
Hence, $\frac{1}{2}((\overline{K}\cap R_k)-(\overline{K}\cap R_k))\subseteq A\cup\{0\}$ and thus
\begin{equation*}
\begin{aligned} 
|\overline{K}\cap R_k|&\leq \big|(\overline{K}\cap R_k)-(\overline{K}\cap R_k)\big|\\
&= \Big|\frac{1}{2}\big((\overline{K}\cap
R_k)-(\overline{K}\cap R_k)\big)\Big|\\
&\leq |A\cup\{\vnull\}|= |A| + 1.
\end{aligned}
\end{equation*}
If $A=\emptyset$, we immediately get
\begin{equation*} 
  \LE_\Lambda(\overline{K})\leq
  4^{n-1}\leq4^{n-1}\big|(K\cap\Z^n)|\vv^\perp\big|
\end{equation*}
and the claim of the lemma follows. So let $A\ne\emptyset$. Then
$|\overline{K}\cap R_k|\leq 2|A|$ which leads to
$|\overline{K}\cap\Lambda|\leq 4^{n-1}\cdot 2|A|$ and thus \eqref{eq:itsuffices}.
\end{proof}

\begin{remark}
\label{rem:projections}
The inequality of Lemma \ref{lemma:preimages} is essentially best-possible, in the sense that in any dimension, there is a convex body $K\in\Ko^n$ with $\ve_n\in K$, such that
\begin{equation*}
\LE_{\Z^n|\ve_n^\perp} (K|\ve_n^\perp) = 3^{n-1} \quad \mathrm{and} \quad (K\cap\Z^n)|\ve_n^\perp = \{\vnull\}.
\end{equation*}
To see this, let $\vu=(1,2,4,...,2^{n-1})^T\in\R^n$. We have $C_n\cap\vu^\perp\cap\Z^n =\{\vnull\}$; Suppose there was a non-zero point $\vx\in C_n\cap\vu^\perp\cap\Z^n$. Let $i$ be the largest index such $\vx_i\neq 0$. By symmetry, we may assume that $\vx_i>0$. It follows from 
\begin{equation}
\label{eq:2hochk}
\sum_{j=0}^{k-1} 2^j= 2^k-1,
\end{equation}
that $\langle \vx,\vu\rangle \geq 1$, a contradiction.

On the other hand, we have $(C_n\cap\vu^\perp)|\ve_n^\perp=C_{n-1}$; Let $\vx\in\{\pm 1\}^{n-1}$ be a vertex of $C_{n-1}$. Then, by \eqref{eq:2hochk}, $\big|\sum_{i=1}^{n-1} x_i2^{i-1}\big|\leq 2^{n-1}$. So there exists $x_n\in[-1,1]$ such that $(\vx,x_n)^T\in C_n\cap \vu^\perp$.

Thus, the convex body $K=\conv\big((C_n\cap \vu^\perp)\cup \{\pm \ve_n\}\big)$ has the desired properties.
\end{remark}

Now we are ready for the proof of Theorem \ref{thm:drev_lw}.
\begin{proof}[Proof of Theorem \ref{thm:drev_lw}]
  We abbreviate $\lambda_i=\lambda_i(K)$ and let
  $\vv_i\in\lambda_i\,K\cap\Z^n$, $1\leq i\leq n$,  be linearly
  independent. Due to our assumption we have $\lambda_n\leq 1$ and so   $\vv_i\in K$. So we may apply Lemma \ref{lemma:preimages} to obtain \begin{equation}\label{eq:rev_lw_eq1}
\prod_{i=1}^n \LE_{\Z^n|\vv_i^\perp}(K|\vv_i^\perp)\leq O(1)^{n^2}\prod_{i=1}^n \big|Z|\vv_i^\perp\big|,
\end{equation}
where $Z=K\cap\Z^n$. It is therefore enough to show
\begin{equation} 
  \prod_{i=1}^n \big| Z|\vv_i^\perp\big| \leq O(1)^{n^2} |Z|^{n-1}
  \label{eq:7}
\end{equation}   
 To this end we set $S_i = Z \cap \R \vv_i$, $1\leq i\leq n$. Then
 $|S_i| = 2\lfloor 1/\lambda_i\rfloor +1$. Now we choose a subset
 $Z_i\subseteq Z$ such that the projection $Z_i\rightarrow
 Z|\vv_i^\perp$ is bijective. Clearly, $Z_i+S_i\subseteq Z+Z$ and so we
 have 
\begin{equation*} 
\big| Z+Z\big|\geq  \big|Z_i+S_i\big| = (2\lfloor 1/\lambda_i\rfloor+1) \cdot
 \big|Z|\vv_i^\perp\big|\geq \frac{3}{4}(\lfloor 2/\lambda_i\rfloor
 +1)\big|Z|\vv_i^\perp\big|.
\end{equation*} 
So we obtain
\begin{equation*} 
|Z+Z|^n\geq  \Big(\frac{3}{4}\Big)^n \prod_{i=1}^n
\lfloor 2/\lambda_i+1\rfloor \prod_{i=1}^n \big|Z|\vv_i^\perp\big|.
\end{equation*} 
In view of Lemma \ref{lemma:elementary-trafos}, \eqref{eq:dilates} we
have $|Z+Z|\leq \#(2K)\leq 4^n\#K=4^n|Z|$ and thus
\begin{equation} 
4^{n^2} \Big(\frac{4}{3}\Big)^n|Z|^n\geq  \prod_{i=1}^n
\lfloor 2/\lambda_i+1\rfloor \prod_{i=1}^n \big|Z|\vv_i^\perp\big|.
\end{equation} 
Finally we use \eqref{eq:upper-bhw}, i.e., $\prod_{i=1}^n\lfloor
2/\lambda_i+1\rfloor \geq 3^{-n} |Z|$ in order to get \eqref{eq:7}.
\end{proof}

\begin{remark}\hfill
\label{rem:drev-lw}
\begin{enumerate}
\item The above proof does not depend on the particular properties of the lattice $\Z^n$. So one obtains the same statement for an arbitrary $n$-dimensional lattice $\Lambda\subseteq\R^n$. More precisely, if $K\in\Ko^n$ fullfills $\dim(K\cap\Lambda)=n$, we have
\begin{equation*}
\LE_\Lambda(K)^{\frac{n-1}{n}} \geq c^{-n} \Big(\prod_{i=1}^n \LE_{\Lambda|\vv_i^\perp}(K|\vv_i^\perp)\Big)^{\frac{1}{n}},
\end{equation*}
where $\vv_i\in\lambda_i(K,\Lambda)K\cap\Lambda$ are linearly independent.

\item Also, the above approach yields a reverse Loomis-Whitney-type inequality, if one aims for a lattice basis, instead of merely independent lattice vectors. 
However, in order to apply Lemma \ref{lemma:preimages} one has to ensure that the basis is contained in $K$. For the basis $\{\vb_1,...,\vb_n\}$ from equation \eqref{eq:thm:basis}, this means that one has to enlarge $K$ by a factor $n$. So in this case, we obtain
\begin{equation*}
\LE(K)^{\frac{n-1}{n}} \geq (cn)^{-n} \Big(\prod_{i=1}^n \LE_{\Z^n|\vv_i^\perp}(K|\vb_i^\perp)\Big)^{\frac{1}{n}}.
\end{equation*}
\end{enumerate}

\end{remark}


\section{Unconditional Bodies}
\label{sec:unconditional}

A convex body $K$ is called \emph{unconditional}, if it is symmetric
to all the coordinate hyperplanes, i.e., $(\pm x_1,...,\pm x_n)\in K$,
for any $\vx\in K$.  For such bodies we can improve some of our
inequalities.  We start with Lemma \ref{lemma:elementary-trafos}, \eqref{eq:dilates}

\begin{lemma}
\label{lemma:unconditional-dilates}
Let $K\in\K^n$ be unconditional and $m\in\N$. Then  $\#mK\leq (2m-1)^n \#K$ and the inequality is sharp.
\end{lemma}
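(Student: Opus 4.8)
The plan is to mimic the partition-into-cosets argument used in Lemma \ref{lemma:elementary-trafos}, but to exploit unconditionality to replace the modulus $2\Z^n$ by the finer modulus $m\Z^n$ (in each coordinate), which is exactly what should shave the constant from $(2m)^n$ down to $(2m-1)^n$. Concretely, consider the sublattice $\Lambda = m\Z^n$ of $\Z^n$; it has index $m^n$, so $\Z^n$ splits into $m^n$ cosets $\Gamma_1,\dots,\Gamma_{m^n}$ of $\Lambda$. For each coset, take two lattice points $\vy_1,\vy_2 \in mK \cap \Gamma_j$, write $\vy_1 = m\vx_1$, $\vy_2 = m\vx_2$ with $\vx_1,\vx_2 \in K$; then $\vy_1 - \vy_2 \in m\Z^n$, so $\tfrac1m(\vy_1-\vy_2) \in \Z^n$, and also $\tfrac1m(\vy_1-\vy_2) = \vx_1 - \vx_2$. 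The point is that $\vx_1 - \vx_2$ need not lie in $K$ for a merely symmetric body, but if $K$ is \emph{unconditional} then $\tfrac12(\vx_1-\vx_2)$ has all coordinates of absolute value at most $\max_i(|x_{1,i}|,|x_{2,i}|) \le \suk_K(\ve_i)$-type bounds... — so the cleaner route is: by unconditionality, $\vx_1 - \vx_2 \in K - K = 2K$ is not quite what we want either. Let me instead use the standard fact that for unconditional $K$ one has $\tfrac12(\vx_1 - \vx_2) \in K$ whenever $\vx_1,\vx_2 \in K$ share the same sign pattern; to arrange this, refine the coset decomposition so that each class also fixes, for each coordinate, whether the representative is $\ge 0$ or $<0$ — this costs nothing asymptotically but needs to be handled so the count stays $(2m-1)^n$ rather than $(2m)^n$.

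A cleaner approach, and the one I would actually write, is a one-dimensional reduction via sections. Write $\#(mK) = \sum_{j\in\Z} \#\big((mK)\cap(j\ve_n + \ve_n^\perp)\big)$. Only indices $j$ with $|j| \le \suk_{mK}(\ve_n) = m\,\suk_K(\ve_n)$ contribute. For each such slice, $(mK)\cap(j\ve_n+\ve_n^\perp)$ is (after projecting to $\ve_n^\perp$) contained in $m\cdot(K|\ve_n^\perp)$, which is again unconditional in $\R^{n-1}$; by the inductive hypothesis in dimension $n-1$ its lattice-point count is at most $(2m-1)^{n-1}\#(K|\ve_n^\perp)$. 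Summing over the at most $2m\,\suk_K(\ve_n)+1$ relevant slices and then carefully comparing $\sum_j 1$ over slices of $mK$ with the corresponding count for $K$ — here one uses that a slice of $K$ at integer height $j'$ is nonempty iff $|j'| \le \suk_K(\ve_n)$, so $K$ meets $2\lfloor \suk_K(\ve_n)\rfloor + 1$ such hyperplanes while $mK$ meets at most $2\lfloor m\,\suk_K(\ve_n)\rfloor + 1 \le m(2\lfloor \suk_K(\ve_n)\rfloor+1)$ of them, but the gain of $m^n \to (2m-1)^n$ forces a tighter bookkeeping — one arrives at $\#(mK) \le (2m-1)\cdot(2m-1)^{n-1}\cdot\max_j \#(\text{slice of }K)$, closing the induction. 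The base case $n=1$ is the elementary statement $\#(m[-a,a]) = 2\lfloor ma\rfloor + 1 \le (2m-1)(2\lfloor a\rfloor+1)$ when $[-a,a]$ contains a nonzero lattice point, i.e. $a\ge 1$ (and is trivial otherwise), which is precisely where unconditionality/symmetry of the interval enters.

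The sharpness half is easy and I would dispatch it with the example already used in Lemma \ref{lemma:elementary-trafos}: take $K = [-(1-\tfrac{1}{2m}),\, 1-\tfrac1{2m}]^n$, which is unconditional, with $\#K = 1$ and $\#(mK) = \#\big([-(m-\tfrac12),m-\tfrac12]^n\big) = (2m-1)^n$, so equality holds.

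The main obstacle is the bookkeeping in the inductive step: a naive slice-and-sum gives the factor $2m\,\suk_K(\ve_n)+1$ against $K$'s $2\lfloor\suk_K(\ve_n)\rfloor+1$, whose ratio is $\le m$ but \emph{not} $\le 2m-1$ unless $\suk_K(\ve_n)\ge 1$ (equivalently $K$ meets a hyperplane $j\ve_n+\ve_n^\perp$ with $j\neq 0$). So, exactly as in the proof of Theorem \ref{thm:dmeyer}, I expect to split into two cases: if some coordinate direction $\ve_n$ has $\suk_K(\ve_n) < 1$ (so $mK$ still fits between heights $-(m-1)$ and $m-1$, i.e. meets at most $2m-1 = (2m-1)\cdot 1$ hyperplanes while $K$ meets exactly one), handle that direction directly and recurse on the $(n-1)$-dimensional unconditional body $K\cap\ve_n^\perp$ using $\#K \ge \#(K\cap\ve_n^\perp)$; otherwise $\suk_K(\ve_n)\ge 1$ and the slice ratio is genuinely $\le 2m-1$. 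Reconciling the two cases so the constant is uniformly $(2m-1)^n$ is the only delicate point; everything else is the routine coset/pigeonhole machinery already developed in the paper.
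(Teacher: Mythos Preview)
Your sharpness example is correct and matches the paper's, and you correctly identify the one-dimensional statement $2\lfloor ma\rfloor+1\le(2m-1)(2\lfloor a\rfloor+1)$ as the heart of the matter. The gap is in your inductive step: the bound you assert,
\[
\#(mK)\ \le\ (2m-1)\cdot(2m-1)^{n-1}\cdot\max_j\#\big(K\cap(j\ve_n+\ve_n^\perp)\big)\ =\ (2m-1)^n\,\#(K\cap\ve_n^\perp),
\]
is false. For $K=r\,\conv\{\pm\ve_1,\dots,\pm\ve_n\}$ with $r$ large one has $\#(mK)\sim(2mr)^n/n!$ while $(2m-1)^n\#(K\cap\ve_n^\perp)\sim(2m-1)^n(2r)^{n-1}/(n-1)!$, and the ratio tends to infinity with $r$. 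Replacing every slice of $mK$ by the single central slice of $K$ discards the contribution of the nonzero-height slices to $\#K$; your case split on $\suk_K(\ve_n)$ does not repair this, since in the case $\suk_K(\ve_n)\ge1$ you only control the ratio of the \emph{number} of slices, not any pairing between them.

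The paper avoids the issue by scaling one coordinate at a time. If $K'$ is obtained from $K$ by multiplying only the first coordinate by $m$, then the lines parallel to $\ve_1$ meet $K$ and $K'$ in symmetric intervals (this is where unconditionality enters), and each fibre of $K'$ is exactly the $m$-dilate of the corresponding fibre of $K$; the one-dimensional inequality (which the paper proves by a short parity argument, splitting on whether the endpoint is an integer) then gives $\#K'\le(2m-1)\#K$ by summing over fibres, and iterating over all $n$ coordinates yields $\#(mK)\le(2m-1)^n\#K$. Your hyperplane-slicing induction can in fact be repaired --- group the heights $j\ge0$ of $mK$ by $k=\lfloor j/m\rfloor$, bound the slice at height $j$ by $(2m-1)^{n-1}\#\big(K\cap(k\ve_n+\ve_n^\perp)\big)$, and sum --- but the version you wrote, passing through the maximum slice, does not close.
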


\begin{proof}
First, we prove the claim for $\dim K=1$, i.e., we may assume that  $K=[-x,x]\subseteq\R$,
$x\geq 0$. Then, $\LE(K)=2\lfloor x\rfloor +1$. In case that $x\in\Z$ we have
\begin{equation*} 
  \# mK = 2mx+1\leq m(2x+1)= m\cdot \#K.
\end{equation*}
  So let $x\notin\Z$. Then, $$\#mK \leq 2mx+1 < 2m(\lfloor x\rfloor
  +1)+1.$$ Since both sides of the inequality are odd integers, we
  obtain 
  
  \begin{equation*}
  \begin{aligned}
  \#mK &\leq 2m(\lfloor x\rfloor +1)-1 = 2m\lfloor x\rfloor +2m-1\\
  & = (2m-1)\big(\frac{m}{2m-1}2\lfloor x\rfloor +1\big) \leq (2m-1)\LE K.
  \end{aligned}
  \end{equation*}

Next, let $K\subseteq\R^n$ be an arbitrary unconditional convex
body. Consider the unconditional body $K^\prime$ that we obtain by
multiplying the first coordinates in $K$ by $m$. The lattice points in
$K$ and $K^\prime$ can be partitioned into intervals parallel to
$e_1$. The intervals that we see in $K^\prime$ are exactly the
intervals of $K$, multiplied by $m$. So by the $1$-dimensional case  we have $\#K^\prime \leq (2m-1) \# K$. If we repeat this argument for every coordinate, we end up with the desired inequality. The cubes $K=\big[ -(1-\frac{1}{2m}),
1-\frac{1}{2m}\big]^n$   show that the inequality is sharp.
\end{proof}

We conjecture $(2m-1)^n$ to be the right constant also for arbitrary
symmetric convex bodies.  Lemma \ref{lemma:unconditional-dilates} yields a slightly improved version of Theorem \ref{thm:dmeyer} for the class of unconditional bodies.

\begin{proposition}
\label{prop:dmeyer-unconditional}
Let $K\in\Ko^n$ be unconditional. Then, $$\#K^{\frac{n-1}{n}}\geq \frac{1}{3^{n-1}} \Big(\prod_{i=1}^n \#(K\cap \ve_i^\perp)\Big)^{\frac{1}{n}}.$$
\end{proposition}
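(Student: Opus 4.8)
The plan is to repeat the proof of Theorem~\ref{thm:dmeyer} almost word for word, replacing the dilation estimate \eqref{eq:dilates} by the sharper bound of Lemma~\ref{lemma:unconditional-dilates}. Since the latter carries no additive ``$+1$'', the case distinction in the proof of Theorem~\ref{thm:dmeyer} disappears and the argument becomes shorter. Write $K_i=K\cap\ve_i^\perp$ and $\Lambda_i=\Z^n\cap\ve_i^\perp$, the latter a copy of $\Z^{n-1}$ inside the coordinate hyperplane $\ve_i^\perp$. First I would note that $K_i$, and hence $\frac{1}{2}K_i$, is again unconditional as a body in $\ve_i^\perp$: flipping the sign of any coordinate other than the $i$-th preserves the equation $x_i=0$ and maps $K$ into itself. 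Applying Lemma~\ref{lemma:unconditional-dilates} to the unconditional body $\frac{1}{2}K_i$ inside the $(n-1)$-dimensional lattice $\Lambda_i$ with $m=2$ gives, for each $1\le i\le n$,
\begin{equation*}
\LE_{\Lambda_i}(K_i)=\LE_{\Lambda_i}\Bigl(2\cdot\tfrac{1}{2}K_i\Bigr)\le 3^{n-1}\,\LE_{\Lambda_i}\Bigl(\tfrac{1}{2}K_i\Bigr)=3^{n-1}\,\LE_{2\Lambda_i}(K_i),
\end{equation*}
the last equality coming from the bijection $\vx\mapsto 2\vx$ between $\frac{1}{2}K_i\cap\Lambda_i$ and $K_i\cap 2\Lambda_i$.

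The second ingredient is the injective linear map already used in the proof of Theorem~\ref{thm:dmeyer}, namely
\begin{equation*}
\phi\colon \prod_{i=1}^{n}\bigl(K_i\cap 2\Lambda_i\bigr)\longrightarrow (K\cap\Z^n)^{n-1},\qquad \phi(\va_1,\dots,\va_n)=\Bigl(\tfrac{1}{2}(\va_2-\va_1),\dots,\tfrac{1}{2}(\va_n-\va_1)\Bigr).
\end{equation*}
One checks that $\phi$ is well defined: $\frac{1}{2}(\va_j-\va_1)\in K$ by the symmetry and convexity of $K$, and $\frac{1}{2}(\va_j-\va_1)\in\Z^n$ because $\va_1,\va_j\in 2\Z^n$. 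It is also injective, since $\va_1\in\ve_1^\perp$ and $\va_j\in\ve_j^\perp$: the first coordinate of $\frac{1}{2}(\va_j-\va_1)$ equals $\frac{1}{2}(\va_j)_1$ and its $j$-th coordinate equals $-\frac{1}{2}(\va_1)_j$, so the image first recovers $\va_1$ (whose first coordinate is $0$) and then every $\va_j$. Hence $\prod_{i=1}^{n}\LE_{2\Lambda_i}(K_i)\le\LE(K)^{n-1}$.

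Combining the two displays,
\begin{equation*}
\prod_{i=1}^{n}\LE(K\cap\ve_i^\perp)=\prod_{i=1}^{n}\LE_{\Lambda_i}(K_i)\le 3^{n(n-1)}\prod_{i=1}^{n}\LE_{2\Lambda_i}(K_i)\le 3^{n(n-1)}\,\LE(K)^{n-1},
\end{equation*}
and taking $n$-th roots gives the claim. I do not foresee any real obstacle: the only steps needing care are the (immediate) transfer of unconditionality to the coordinate slices and the injectivity of $\phi$, which is the same short coordinate bookkeeping as in the proof of Theorem~\ref{thm:dmeyer}. Let me also point out that the constant $3^{-(n-1)}$ is asymptotically optimal within this class, since the double pyramids $K_h$ of Example~\ref{ex:berg} are unconditional.
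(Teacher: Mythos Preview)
Your proof is correct, but it follows a genuinely different route from the paper's. You mirror the proof of Theorem~\ref{thm:dmeyer}: apply the sharpened dilation estimate of Lemma~\ref{lemma:unconditional-dilates} to each slice $\tfrac{1}{2}K_i$ in dimension $n-1$, and then invoke the injective map $\phi$ to collect the factors. The paper instead exploits a geometric feature specific to unconditional bodies: since $\suk_K(\ve_i)$ is attained on the axis $\R\ve_i$, one has the cylinder inclusion $K_i+[-h_i,h_i]\ve_i\subseteq 2K$, which gives $(2\lfloor h_i\rfloor+1)\,\LE K_i\le \LE(2K)$ for each $i$; combining $n-1$ of these with the box bound $\LE K_n\le\prod_{i=1}^{n-1}(2\lfloor h_i\rfloor+1)$ yields $\prod_i \LE K_i\le \LE(2K)^{n-1}$, and a single application of Lemma~\ref{lemma:unconditional-dilates} to $2K$ finishes.

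Both arguments land on the same constant $3^{-(n-1)}$. Your approach has the virtue of making transparent that the improvement over Theorem~\ref{thm:dmeyer} comes \emph{solely} from the sharper dilation bound, with no new idea needed. The paper's argument, on the other hand, is self-contained (it does not rely on the map $\phi$) and brings in a new geometric ingredient---the axis-aligned cylinder inclusion---which is of independent interest and also explains the remark that Proposition~\ref{prop:dmeyer-unconditional} simultaneously sharpens Theorem~\ref{thm:drev_lw} for unconditional bodies.
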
 

\begin{proof} 
Again, we write $K_i = K\cap \ve_i^\perp$ and $h_i =
\suk(K,\ve_i)$. Note that $h_i$ is attained by a multiple of $\ve_i$,
since $K$ is unconditional. This implies
\begin{equation*} 
  K_i+[-h_i,h_i] \ve_i\subseteq 2K,
\end{equation*} 
and we obtain
\begin{equation*}
\#(2K)^{n-1} \geq \prod_{i=1}^{n-1}\big( (2\lfloor h_i\rfloor+1) \#K_i \geq
\prod_{i=1}^n \#K_i,
\end{equation*}

where the last inequality follows from $K_n\subseteq
[-h_1,h_1]\times...\times[-h_{n-1},h_{n-1}]$. The claim follows by
applying Lemma \ref{lemma:unconditional-dilates} to the left hand side above.
\end{proof}

Note that for an unconditional body $K\subseteq\R^n$ one has $K\cap
\ve_i^\perp = K|\ve_i^\perp$, $1\leq i\leq n$. Therefore, Proposition
\ref{prop:dmeyer-unconditional} is also a sharpening of Theorem
\ref{thm:drev_lw}. In fact, following the lines of the proof of the discrete reverse Loomis-Whitney inequality in Section \ref{sec:proj}, the above proof is a simplification of the proof in Section \ref{sec:proj}.

Moreover, the inequalities of Theorem
\ref{thm:drev_meyer} and \ref{thm:dslicing} hold with constant 1 for
unconditional bodies, by the Loomis--Whitney
inequality.

As for the discrete Brunn inequality, a constant 1 is obtained when intersecting an unconditional body $K$ with a coordinate subspace $L$, since every slice $K\cap(L+\vt)$ is mapped into the central slice injectively by the orthogonal projection onto $L$.  Moreover, for any hyperplane $H$ there is a coordinate $i$ such that the projection $H\rightarrow\ve_i^\perp$ is bijective and maps lattice points in $H$ to lattice points in $\ve_i^\perp$ (The index $i$ can be chosen to be an index for which the normal vector $\vv$ of $H$ is non-zero). Therefore, the maximal hyperplane section with respect to $\LE(\cdot)$ can always be chosen to be a coordinate section.

However, for general subspaces $L$ we cannot hope for a constant 1 in the discrete Brunn inequality, as the next example illustrates.

\begin{example}
\label{ex:dbrunn-unconditional}
Consider the symmetric cube $C_n=[-1,1]^n$ and the vector
$\vu=(1,2,4,...,2^{n-1})^T\in\R^n$. Then we have $\LE(C_n\cap \vu^\perp)=1$ (cf.\ Remark \ref{rem:projections}).

On the other hand, we have $\LE(C_n\cap\{\vx\in\R^n:\langle \vx,\vu\rangle=1\})= n$, because, in view of \eqref{eq:2hochk}, the points $\vx_k=\ve_k-\sum_{j=0}^{k-1} \ve_j$, $1\leq k\leq n$ are contained in this section and as in Remark \ref{rem:projections}, by considering the maximal non-zero coordinate of a lattice point in this section, there are no further points.
\end{example} 

\section{Acknowledgements}

We thank Eduardo Lucas Mar\'{i}n and Matthias Schymura for helpful comments and suggestions.


\bibliographystyle{plain}
\bibliography{dslicing}

\end{document}